\theoremstyle{plain}
\newtheorem{theorem}{Theorem}
\newtheorem{lemma}{Lemma}
\newtheorem{proposition}{Proposition}
\theoremstyle{definition}
\theoremstyle{remark}
\newtheorem*{remark}{Remark}
\newcommand{\RR}{\mathbb{R}}
\newcommand{\di}{\text{d}}
\newcommand{\lune}{\mathcal{L}}
\newcommand{\ind}[1]{{\mathbf{1}}_{#1}}
\title{Siblings in $d$-dimensional nearest neighbour trees}
\author{Jérôme Casse\\{\small Université Paris-Saclay, CNRS}\\{\small Laboratoire de mathématiques d’Orsay}\\{\small 91405 Orsay, France}}
\date{}
\begin{document}
\maketitle

\begin{abstract}
  Pick a sequence of uniform points on the $d$-dimensional sphere. Then, link the $n$th point to its closest one that arrives in the past. This constructs a labelled tree called the nearest neighbour tree on the $d$-dimensional sphere. These trees share some properties with the random recursive tree: the height of the last arrival node, the mean degree of the root, etc. On the contrary, the number of leaves seems to depend on dimension $d$, but no such properties have been proved yet. In this article, we prove that the mean number of siblings depends on~$d$.\par
  In particular, we give explicit calculations of this number. In dimension $1$, it is $1 + \ln 2$ and, in any dimension $d$, it has an explicit integral form, but unfortunately, it does not give an explicit number. Nevertheless, we show that it converges to~$2$ when $d \to \infty$ exponentially quick at a rate of $\sqrt{3}/2$.\par
  To prove these results, we look at the local limit of those trees and we do some fine computations about the intersection of two balls in dimension $d$. In particular, we obtain a non-trivial upper bound for those intersections in some precise cases.\par
  Keywords: nearest neighbour tree, local limit, intersection of balls, high dimension\par
  AMS MSC 2020: 05C05, 60B05, 05C07, 51M04
\end{abstract}

\section{Introduction}
\paragraph{Nearest-neighbour trees (NNT).}
We study an embedded version of random growing trees whose attachment rule is based on a dynamical nearest neighbour.\par
More precisely, let $(E, \delta, \mu)$ be a Polish space equipped with a metric $\delta$ and with a probability measure $\mu$ on $E$. Let $(X_n)_{n \geq 1}$ be a sequence of i.i.d.\ points sampled according to $\mu$. From this sequence, we define an increasing sequence of labelled random trees $(T_n)_{n \geq 1}$, called \emph{labelled nearest-neighbour trees} (labelled NNT), whose sets of vertices is $\{1,\dots,n\}$ and sets of edges $E_n$ is defined in the following way:
\begin{itemize}
\item $E_1 = \emptyset$,
\item for any $n \geq 2$, we denote
  \begin{equation} \label{eq:antecedent}
    A(n) = \text{argmin} \{1 \leq i \leq n-1 : \delta(X_i,X_n)\},
  \end{equation}
  then
  \begin{displaymath}
    E_{n} = E_{n-1} \cup \left\{ \left(A(n),n \right) \right\}.
  \end{displaymath}
  If the argmin is not unique, the label $A(n)$ is chosen uniformly at random between the set of labels that realise the minimum. In our step, with probability one, it never happens.
\end{itemize}
In words, at each step, the tree grows by linking the new node~$n$ to the node~$i<n$ that corresponds to the label of the point~$X_i$ that is the nearest of~$X_n$. A realisation of such a construction is given in Figure~\ref{fig:construction}. Moreover, from these labelled NNT, we can construct \emph{unlabelled rooted NNT} by forgetting the labels and rooting them at the node anciently labelled $1$, and also \emph{unlabelled unrooted NNT} by forgetting all the labels.\par

\begin{figure}
  \begin{center}
    \begin{tabular}{ccc}
      \includegraphics[width=0.4\linewidth]{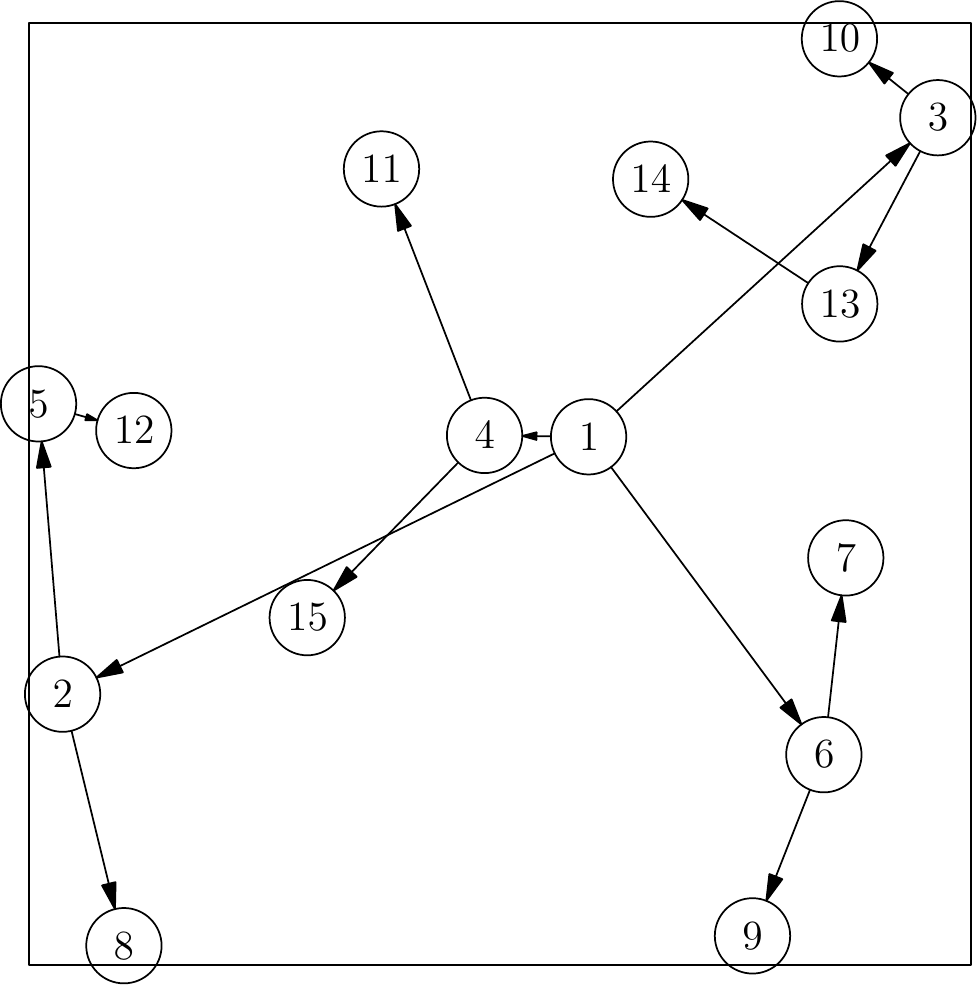} & \hspace{1cm} & \includegraphics[width=0.4\linewidth]{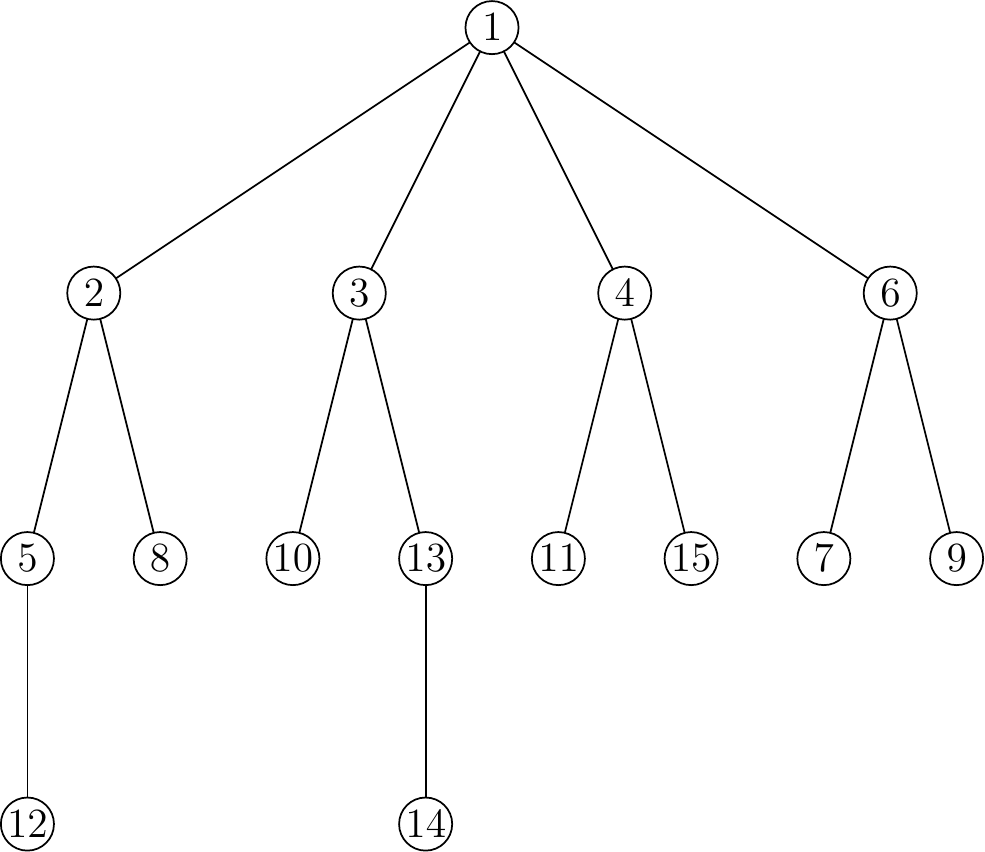}\\
      \includegraphics[width=0.4\linewidth]{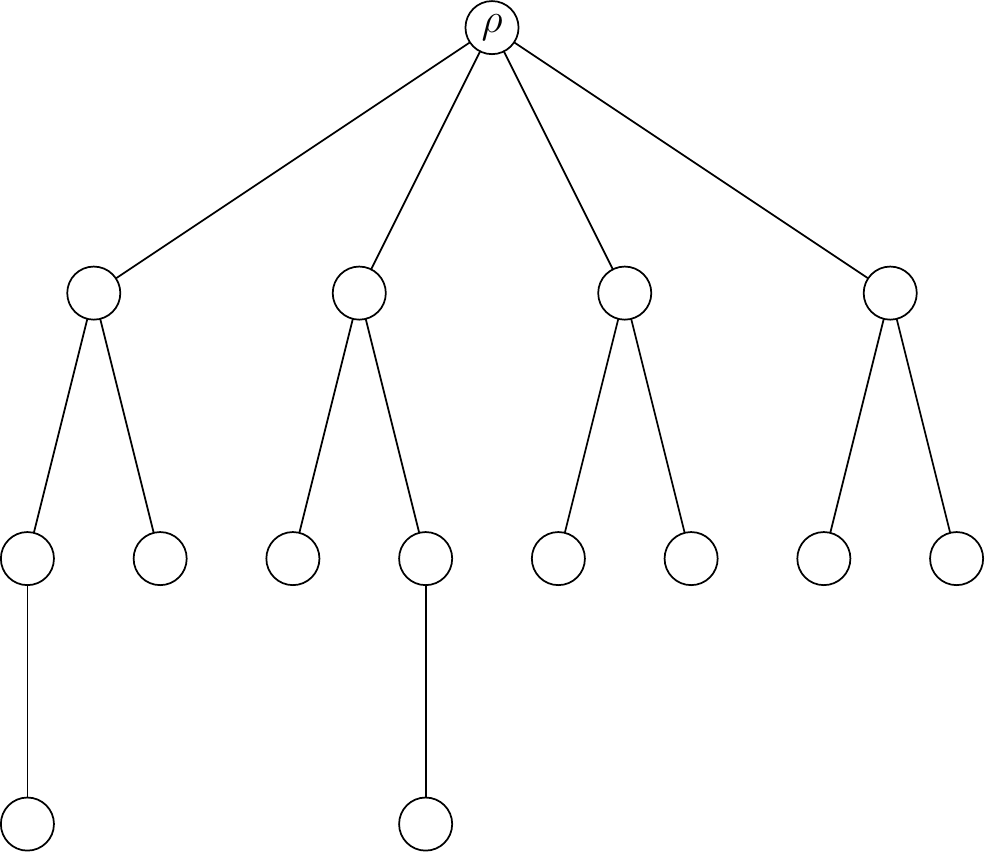} & \hspace{1cm} & \includegraphics[width=0.4\linewidth]{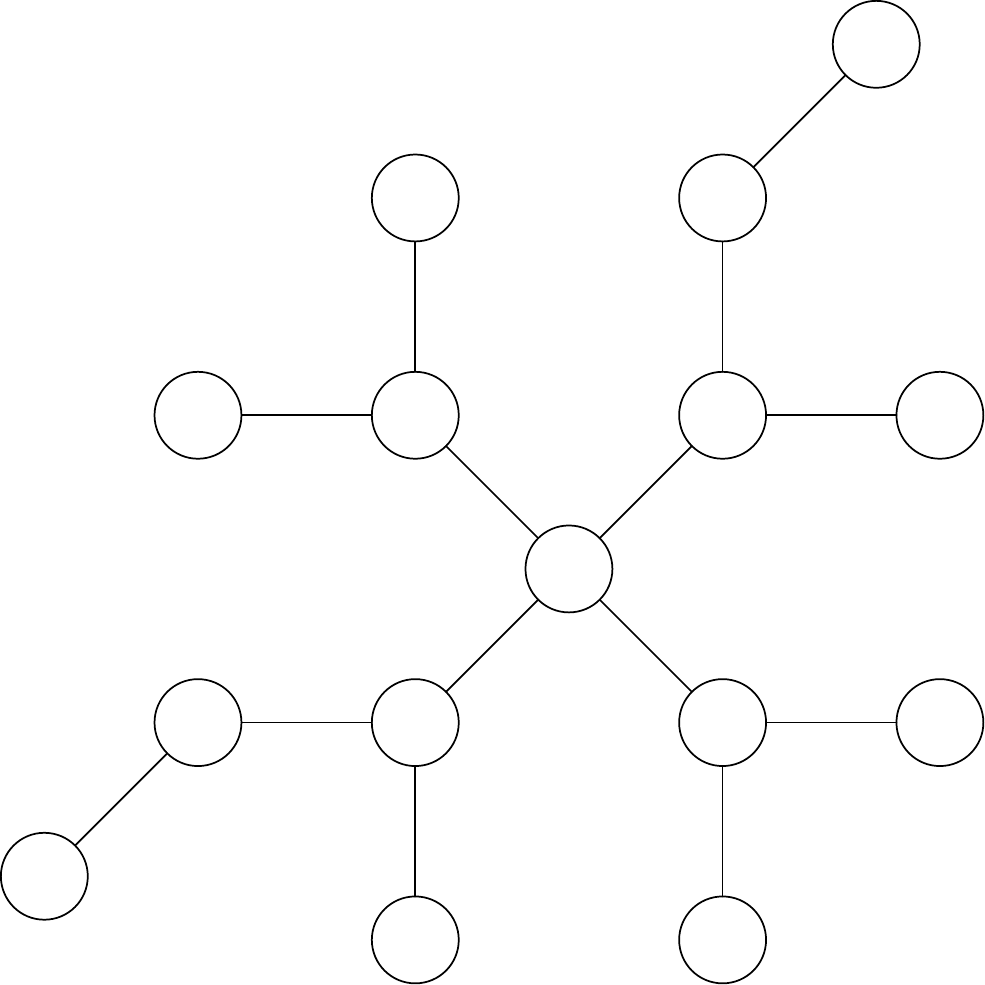}
    \end{tabular}
  \end{center}
  \caption{Top-left: Illustration of the dynamic construction of the nearest-neighbour tree of size $15$ in the square equipped with the Euclidean metric and the uniform law. The points are located in the centre of each circle. The label inside a circle indicated its order of arrivals: label $1$ is for the first point, etc. Top-right: The labelled NNT obtained by the construction. It is the main object of study of this article. Bottom-left: The unlabelled rooted version. Bottom-right: The unlabelled unrooted version.}
  \label{fig:construction}
\end{figure}

\paragraph{Random Recursive Tree}
If the chosen metric $\delta = 1$ for any couple of points, $A(n)$ is chosen uniformly in $\{1,\dots,n-1\}$ then the NNT is the Random Recursive Tree (RRT). Many properties of this tree have been studied such as its height~\cite{Devroye87,Devroye88,Pittel94,DFF10,PS22}, the degrees of its nodes~\cite{DL95,GS02}, the sizes of its subtrees~\cite{GM05,BB14}, and other properties~\cite{Mahmoud91,Dobrow96}. In particular, let us mention the survey about them~\cite{SM95} as well as chapters of some books~\cite[chapter 6]{Drmota09},~\cite[Chapter16.2]{FK23} and~\cite[chapter~9]{Curien23}. This list of references is obviously non-exhaustive and interested readers could also refer to references inside the references given.

\paragraph{d-NNT}
In this article, we focus on the cases where $(E,\delta,\mu)$ is the sphere $\mathbb{S}_d$ in dimension $d \in \{1,2,3,\dots\}$ equipped with the Euclidean metric $\lVert.\rVert_2$ on $\mathbb{S}_d$ and the Lebesgue measure $\lambda$ on~$\mathbb{S}_d$. Moreover, we are interested mainly in the labelled NNT. Hence, in the following of this article, we call a $d$-NNT (for $d$-nearest-neighbour tree), a labelled NNT constructed on the space $(\mathbb{S}_d,\lVert.\rVert_2,\lambda)$.\par
Those trees are a critical version of the geometric preferential attachment graph introduced and studied in~\cite{MS02,Jordan10,JW15} where the new node attaches (not deterministically) to a node that is both close to it and that has a high degree. When some parameters of this model degenerate, we obtain the NNT.\par
They are also a critical version of the FKP network model introduced and studied in~\cite{FKP02,BBBCR03} where the new node attach deterministically to the node that is both the closest one to him (for the euclidean metric) and to the root (for the graph metric) according to a deterministic trade-off function. When the trade-off function degenerates, we obtain NNT.\par
From this model of nearest neighbours, a Poissonian colouring was defined and studied when $d=2$ in~\cite{Preater09,Aldous18,BBCS23}.\par
Finally, the $d$-NNT themselves have been studied, to the best knowledge of the author, for now only by Lichev and Mitsche in~\cite{LM21} where they study many of their properties. One of their results used in this article concerns the local limit of $d$-NNT.

\begin{remark}
  When $d \to \infty$, the $d$-NNT seems to be the random recursive tree (RRT). Indeed, when the dimension~$d$ is very large and we take $n$ points with $n$ small according to $d$, then the distances between each couple of points are very close to $\sqrt{2}$. Hence, in the limit $d=\infty$, the distance between all points is $\sqrt{2}$ a.s. To conclude, we need to check the next order of precision. To the best knowledge of the author, this question is still open, but we have not spent too much time to search in detail into the literature on statistics in high dimensions where relatively closed works could have been done.
\end{remark}

\paragraph{Statistical properties of the trees.}
Let us come back for a moment to the general case $(E,\delta,\mu)$. What properties of $(E,\delta,\mu)$ can be inferred from the knowledge of the labelled NNT~$T_n$ or even from its unlabelled (rooted or unrooted) versions? Although it might be obvious that the geometry of the space has an influence on $T_n$, finding examples of properties that witness this dependence is harder than expected.\par
For example, finding a statistic that allows to distinguish the dimension $d$ for $d$-NNT was unknown. Indeed, many statistics do not depend on the dimension, for example for NNT with $n$~nodes, the law of the height of the $n$th point of the tree does not depend on $(E,\delta,\mu)$, the mean degree of the root is the $(n-1)$-th harmonic number $H_{n-1} = \sum_{i=1}^{n-1} \frac{1}{i}$, etc. In~\cite{JW15,LM21}, it is conjectured that the asymptotic number of leaves of $T_n$ allows to recover $d$. In this article, we prove that the expected number of siblings indeed permits recovering it.

\paragraph{Main results.}
The main results of this article concern the \emph{asymptotic mean number of siblings}, denoted $S_d$, of a node in a $d$-NNT. Let $T_n$ be a $d$-NNT of size $n$ and take a node $i \in \{2,\dots,n\}$ that is not $1$. The number $s(i)$ of siblings of $i$ in $T_n$ is the cardinal of the set of nodes that share the antecedent of $i$:
\begin{equation}
  s(i) = \left(\text{card} \{j \in \{2,\dots,n\} : A(j)=A(i)\}\right) - 1
\end{equation}
where $A(i)$ is the label defined in Equation~\eqref{eq:antecedent}. The mean number of sibling $S(T_n)$ of $T_n$ is
\begin{equation}
  S(T_n) = \frac{1}{n} \sum_{i \in \{2,\dots,n\}} s(i).  
\end{equation}

\begin{remark}
  We can extend $s$ such that $s(1) = 0$. In that case, $S(T_n) = \mathbb{E}[s(U)]$ where $U$ is uniform on $\{1,\dots,n\}$.
\end{remark}

The asymptotic of this random number is explicit in dimension $1$ and for the RRT:
\begin{theorem}\label{thm:1fini}
  Let $T_n$ be a $1$-NNT. The expected number of siblings of $T_n$ converges a.s.\ as $n \to \infty$ to $S_1 = 1 + \ln 2$.\par
  Let $T_n$ be a RRT. The expected number of siblings of $T_n$ converges a.s.\ as $n \to \infty$ to $S_\infty = 2$.
\end{theorem}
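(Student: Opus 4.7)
The plan is to reduce both statements to a local-limit computation around a uniformly chosen vertex of $T_n$. Since $s(v)$ depends only on the parent of $v$ and the children of that parent, it is a local functional of the rooted tree; hence, writing $U_n$ for a uniform vertex of $T_n$, the local convergence theorem of Lichev--Mitsche for the $d$-NNT (and the analogous statement for the RRT) combined with uniform integrability of $s(U_n)$ yields $\mathbb{E}[S(T_n)] = \mathbb{E}[s(U_n)] \to \mathbb{E}[s(\rho)]$, where $\rho$ is the root of the local limit. The a.s.\ statement comes from upgrading this to $S(T_n) \to \mathbb{E}[s(\rho)]$ via a variance bound of the form $\mathrm{Var}(S(T_n)) = o(1)$, which in turn follows from a bounded-differences argument for the sequential addition of the $X_i$ (changing one $X_i$ perturbs the tree only in a neighbourhood of bounded expected size) together with Borel--Cantelli along a polynomial subsequence.

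For the RRT, no local-limit technology is needed: the parents $A(j)$ are independent with $A(j) \sim \mathrm{Unif}\{1, \ldots, j-1\}$, so $\Pr[A(i)=A(j)] = (\min(i,j)-1)/((i-1)(j-1))$, yielding $\mathbb{E}[s(i)] = (i-2)/(i-1) + H_{n-1} - H_{i-1}$. Summing gives $\mathbb{E}[S(T_n)] = 2(n-1)/n - 2 H_{n-1}/n \to 2 = S_\infty$.

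For the 1-NNT, after rescaling positions so that the point density is unity, the local limit is a Poisson process on $\mathbb{R}$ of unit intensity carrying i.i.d.\ $\mathrm{Unif}[0,1]$ marks, together with a marked root $(0, U_0)$ with $U_0 \sim \mathrm{Unif}[0,1]$; the ``nearest earlier neighbour'' rule transfers to the limit. To compute $\mathbb{E}[s(\rho)]$ I would: (i) condition on $U_0 = u$, so that the parent $X_*$ of the root sits on a random side at distance $D_* \sim \mathrm{Exp}(2u)$, carries a mark $v \sim \mathrm{Unif}[0, u]$, and the Palm conditioning forbids Poisson points inside the exclusion box $(-D_*, D_*) \times [0, u)$; (ii) apply Mecke's formula to express $\mathbb{E}[s(\rho) \mid u, D_*, v]$ as the integral over $(y, w)$ of the probability that a hypothetical Poisson point at $(y, w)$ has $X_*$ as its nearest earlier neighbour; (iii) split on the position $y$ of the candidate sibling into the five natural regions (past $X_*$, between $X_*$ and $X_0$ on each side of the midpoint $D_*/2$, and past $X_0$), noting that the root blocks $X_*$ whenever the ball of influence contains $0$ (i.e.\ when $y < D_*/2$) while the exclusion box forbids the candidate itself when $y \in (-D_*, D_*)$ and $w < u$; (iv) observe that two of the five regions vanish because of this combined obstruction, and the remaining three reduce, after averaging over $v$ and $D_*$, to Frullani integrals $\int_0^\infty (e^{-as}-e^{-bs})/s\,ds = \ln(b/a)$. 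Collecting terms gives $\mathbb{E}[s(\rho) \mid u] = 1/2 - \ln u + (1/2)\ln(u+1)$, and integrating over $u \in [0,1]$ yields $S_1 = 1 + \ln 2$.

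The main obstacle is precisely step (iii): it is tempting to integrate the hypothetical-child probability over the whole $(y, w)$-plane, overlooking that Poisson points cannot live inside the exclusion box and that the root preempts $X_*$ whenever $w > u$ and the ball around the candidate contains $0$. These two constraints are what make two whole sub-cases vanish and what let the remaining integrals telescope; missing either one inflates the answer away from $1 + \ln 2$. The variance estimate underpinning the a.s.\ conclusion, though not conceptually difficult, requires some care because the tree depends on the underlying points in a non-local way through the attachment rule.
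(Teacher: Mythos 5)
Your proposal is correct and essentially mirrors the paper's proof in both parts. For the RRT, the paper writes the same triple sum over the events $\{A(i)=A(j)\}$ and passes directly to the continuum integral $\int_0^1\int_0^x\frac{1}{x}\int_y^1\frac{1}{z}\,\di z\,\di y\,\di x=2$, whereas you give the exact finite-$n$ expression via $\Pr[A(i)=A(j)]$ and harmonic numbers; this is a minor presentational variant, and both are correct. For the $1$-NNT, the paper likewise invokes the Lichev--Mitsche local limit and computes a Palm--Mecke-type integral, and its region decomposition (blue $x<-r$, black $-r<x<r/2$, red $r/2<x<r$, green $x>r$) is exactly your five regions with the black one split at the origin into your two vanishing sub-cases. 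The one genuine algebraic deviation is that the paper integrates only over older siblings $u_2\in(v,u_1)$ and then multiplies by two, appealing to the exchangeability of the children's marks, while you compute all siblings directly and obtain $\mathbb{E}[s(\rho)\mid u]=\tfrac12-\ln u+\tfrac12\ln(1+u)$; I have verified this conditional formula against the paper's three contributing regions and it is correct, integrating to $1+\ln 2$ (note that the paper's doubled older-sibling integrand happens to be constant in $u_1$, which masks the $u$-dependence your formula exhibits). A small overstatement in your step (iv): only the blue (far-side) region yields a literal Frullani integral; the green and red pieces come out as elementary logarithmic integrals after the Palm averaging. Finally, on the almost-sure convergence, the paper simply cites the concentration result of Lichev--Mitsche, whereas you sketch a variance and bounded-differences argument along a polynomial subsequence; this is a reasonable way to make the claim self-contained, but both you and the paper leave the required uniform-integrability bound on $s(U_n)$ unproved.
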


For dimension $1$, to the best knowledge of the author, it is a new result. The proof is given in Section~\ref{sec:1}. For the RRT case, this could be already deduced from previous works on RRT, for example from~\cite[Proposition~9.2]{Curien23}. We give a short proof of it in Section~\ref{sec:sibRRT}.\par
\smallskip
In any dimension $d$, the asymptotic mean number of siblings $S_d$ is not that much explicit, we obtain a closed integral form. Before giving it, let us introduce the following notations:
\begin{itemize}
\item we denote by $V_d$ the volume of a ball of radius $1$ in dimension $d$ for the Euclidean metric
  \begin{equation}
    V_d = \frac{\pi^{d/2}}{\Gamma(d/2+1)},
  \end{equation}
\item we define the function $F(z)$ by, for any $z>0$,
  \begin{equation}
    F(z) = \frac{1}{z} \left( \frac{1}{z} \ln(1+z) - \frac{1}{1+z} \right) \text{, and}
  \end{equation}
\item we define the function $z(\theta)$ by, for any $\theta \in [0,\pi]$,
  \begin{equation}
    z(\theta) = (2 \cos \theta)_+ = \begin{cases} 2 \cos(\theta) & \text{if } \theta \in [0,\pi/2], \\ 0 & \text{if } \theta \in [\pi/2,\pi]. \end{cases}
  \end{equation}
\end{itemize}

\begin{theorem} \label{thm:main}
  Let $T_n$ be a $d$-NNT. The expected number of siblings of $T_n$ converges a.s.\ as $n \to \infty$ to
  \begin{equation} \label{eq:siblings}
    S_d = 2\ \frac{(d-1) V_{d-1}}{V_d} \int_{0}^{\pi} \di{\theta}  \sin(\theta)^{d-2} \int_{z(\theta)}^\infty \di{z}\, z^{d-1} F\left(\frac{\lune(z,\theta)}{V_d} \right)
  \end{equation}
  where
  \begin{equation} \label{eq:lune}
    \frac{\lune(z,\theta)}{V_d} = z^d - \frac{V_{d-1}}{V_d} \left( \int_{\frac{1-z\cos \theta}{\sqrt{(1-z\cos \theta)^2 + (z \sin \theta)^2}}}^{1} (1-x^2)^{\frac{d-1}{2}}\, \di x + z^d \int_{-1}^{\frac{\cos \theta-z}{\sqrt{(\cos \theta -z)^2 + (\sin \theta)^2}}} (1-x^2)^{\frac{d-1}{2}}\, \di x\right).
  \end{equation}
  
  An other interesting formula for $S_d$ is
  \begin{align}
    S_d & = 2 - \underbrace{2\ \frac{(d-1)V_{d-1}}{dV_d} \int_{0}^{1} \di z \ \left( 1-z^2 \right)^{\frac{d-3}{2}} \left( 1 - \frac{\ln(1+2^d z^d)}{2^d z^d} \right)}_{T_+(d)} \nonumber \\
        & \qquad + \underbrace{2\ \frac{(d-1)V_{d-1}}{dV_d} \int_0^{\pi} \di \theta \sin(\theta)^{d-2} \int_{z(\theta)^d}^\infty \di u \left( F\left( \frac{\lune(u^{1/d},\theta)}{V_d} \right) - F(u) \right)}_{T_-(d)}. \label{eq:2sd}
  \end{align}
\end{theorem}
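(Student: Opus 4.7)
The plan is to identify $S_d$ with a second-factorial-moment Palm expectation in the local limit of the $d$-NNT, then unfold it as an explicit Campbell--Mecke integral and evaluate in closed form. I first rewrite the statistic via the degree: denoting by $D_p$ the number of children of vertex $p$ in $T_n$, the elementary identity $\sum_{i=2}^n s(i) = \sum_p D_p(D_p-1)$ gives $S(T_n) = \frac{1}{n} \sum_p D_p(D_p-1)$. The local-convergence result of~\cite{LM21} states that $T_n$ rooted at a uniform vertex converges locally to the infinite NNT $T_\infty$ built from a Poisson point process $\Pi$ on $\mathbb{R}^d \times [0,1]$ of unit intensity, each $(x,t) \in \Pi$ being linked to the Euclidean-nearest point of $\Pi$ with smaller time coordinate. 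Combined with uniform integrability of $D_p^2$, this yields $S_d = \mathbb{E}^{0}[D(D-1)]$, where $\mathbb{E}^{0}$ is the Palm expectation at a typical point of $\Pi$; a.s.\ convergence follows from an Azuma--Hoeffding bound on the Doob martingale of the i.i.d.\ sequence $(X_i)_{1 \leq i \leq n}$.

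Next I unfold the Palm expectation via Slivnyak--Mecke. With the Palm point placed at $(0, s)$, a point $(y, u) \in \Pi$ is a child of $(0, s)$ iff $s < u$ and no other point of $\Pi$ lies in $B(y, \|y\|) \times [0, u)$. Summing over ordered pairs of distinct children $(y_1, u_1), (y_2, u_2)$ and restricting without loss of generality to $u_1 < u_2$ (compensated by a factor $2$), the additional constraint $(y_1, u_1) \notin B(y_2, \|y_2\|) \times [0, u_2)$ reduces to $\|y_1 - y_2\| \geq \|y_2\|$, and the forbidden region for other points of $\Pi$ has volume $u_1 V_d \|y_1\|^d + u_2 V_d \|y_2\|^d - u_1 V_{\mathrm{lens}}$, where $V_{\mathrm{lens}} = |B(y_1, \|y_1\|) \cap B(y_2, \|y_2\|)|$. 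Hence
\begin{equation*}
  S_d = 2 \int_0^1 \di s \int_s^1 \di u_1 \int_{u_1}^1 \di u_2 \int \int \di y_1\, \di y_2\ \mathbf{1}[\|y_1-y_2\| \geq \|y_2\|]\ e^{-u_1 V_d \|y_1\|^d - u_2 V_d \|y_2\|^d + u_1 V_{\mathrm{lens}}}.
\end{equation*}

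Finally I change variables. Spherical coordinates with $\hat{y}_2$ as axis give $\di y_1\, \di y_2 = d V_d (d-1) V_{d-1}\, r_1^{d-1} r_2^{d-1} \sin(\theta)^{d-2}\, \di r_1\, \di r_2\, \di\theta$. The substitution $r_1 = z r_2$ turns, by the law of cosines, the constraint $\|y_1 - y_2\| \geq \|y_2\|$ into $z \geq (2\cos\theta)_+ = z(\theta)$; scale invariance factors $V_{\mathrm{lens}} = r_2^d \ell(z,\theta)$, so the lune $\lune(z,\theta) = V_d z^d - \ell(z,\theta)$ emerges. Integrating successively $s$ over $[0, u_1]$ (factor $u_1$), $r_2$ via $\int_0^\infty r_2^{2d-1} e^{-K r_2^d}\, \di r_2 = 1/(dK^2)$ with $K = u_1 \lune + u_2 V_d$, and then $u_2, u_1$ by partial fractions and a logarithm, yields $F(\lune/V_d)/V_d^2$; the overall prefactor collapses to $\tfrac{2(d-1)V_{d-1}}{V_d}$, giving~\eqref{eq:siblings}, while decomposing $\ell$ as the sum of two spherical caps at signed heights $(1-z\cos\theta)/\|y_1 - y_2\|$ and $(\cos\theta-z)/\|y_1 - y_2\|$ produces~\eqref{eq:lune}. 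For~\eqref{eq:2sd}, split $F(\lune/V_d) = F(z^d) + (F(\lune/V_d) - F(z^d))$: after $u = z^d$ the second piece is exactly $T_-(d)$, while for the first piece the antiderivative $-\ln(1+u)/u$ of $F$ together with the normalization $\tfrac{(d-1)V_{d-1}}{dV_d}\int_{-1}^1 (1-x^2)^{(d-3)/2}\, \di x = 1$ (the ratio of the surface areas of $\mathbb{S}_{d-1}$ and $\mathbb{S}_d$) produces $2 - T_+(d)$.

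The main technical obstacle is the uniform integrability required to upgrade local convergence of $T_n$ to convergence of the second factorial moments $\frac{1}{n} \sum_p E[D_p(D_p-1)]$: degrees in $d$-NNT can be of order $\log n$ at the oldest vertices, so a naive moment bound does not suffice. I would address it by dominating $D_p$ stochastically by the out-degree in a random recursive tree (where the conditional attachment probabilities are $1/(i-1)$ regardless of geometry) and invoking the known sub-exponential tails there. Once this is settled, the ensuing Campbell--Mecke computation is elementary but requires careful identification of the lens/lune geometry and bookkeeping of the spherical prefactors.
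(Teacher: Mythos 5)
Your proof is correct and follows essentially the same route as the paper: both derive the identical Campbell--Mecke integral from the LM21 local limit, pass to the same spherical coordinates and lune geometry, and integrate out the three time variables by the same partial-fraction calculation to reach $F(\lune/V_d)$, then split off $F(z^d)$ exactly as the paper does to get $T_+(d)$ and $T_-(d)$. The only difference is cosmetic --- you unfold the Palm expectation at the typical point \emph{as parent} (via the identity $\sum_{i\geq 2} s(i)=\sum_p D_p(D_p-1)$ and the second-factorial Mecke formula over ordered pairs of children), whereas the paper conditions directly on the child's view (its time label $u_1$, distance $r$ to its parent, and the parent's label $v$); after rotational symmetry both parametrizations collapse to the same six-fold integral with the same constraint $z\geq(2\cos\theta)_+$ and the same exponent $r_2^d\bigl(u_1\lune+u_2V_d\bigr)$.
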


\begin{remark}
  The quantity $V_d z^d - \lune(z,\theta)$ is the volume of the intersection of two balls in dimension~$d$, one of radius~$1$ and the other of radius~$z$, at distance $\sqrt{1+z^2-2 z \cos \theta}$, see Figure~\ref{fig:intersection}. In Section~\ref{sec:bound}, Lemma~\ref{lem:llb} gives a non-trivial upper bound of its volume according to $z$ and $\theta$. 
\end{remark}

\begin{figure}
  \begin{center}
    \includegraphics{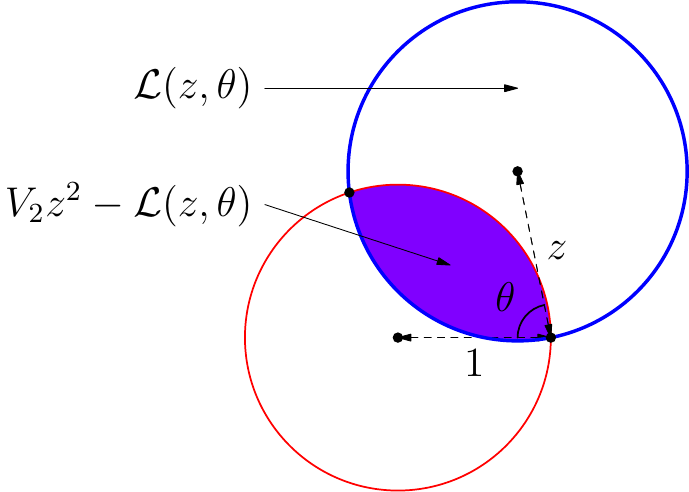}
  \end{center}
  \caption{The red circle is of radius $1$, the blue one is of radius $z$ and the distance between centres is $\sqrt{1+z^2-2 z \cos \theta}$. The purple region denotes their intersection whose volume is $V_2 z^2 - \lune(z,\theta)$ and the area of the blue ball minus the purple region/intersection is $\lune(z,\theta)$.} \label{fig:intersection}
\end{figure}

The expression~\eqref{eq:2sd} given in Theorem~\ref{thm:main} permits to prove that the mean number of siblings $S_d$ converges (conjecture: is monotonically increasing?) to~$2$ at an exponential speed according to~$d$ thanks to the following theorem that expresses the asymptotic behaviours of $T_+(d)$ and $T_-(d)$.
\begin{theorem} \label{thm:as}
  We consider the quantities $T_+(d)$ and $T_-(d)$ as defined in Theorem~\ref{thm:main}. They are both positive, and, as $d \to \infty $,
  \begin{equation}  \label{eq:asT+}
    T_+(d) \sim \frac{2 \sqrt{2\pi}}{3} \frac{1}{\sqrt{d}} \left( \frac{\sqrt{3}}{2} \right)^d
  \end{equation}
  and
  \begin{equation}
    T_-(d) = O\left(\left(\frac{4\sqrt{3}}{9} \right)^d\right). \label{eq:asT-}
  \end{equation}
\end{theorem}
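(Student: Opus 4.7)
The strategy for both bounds is Laplace's method, combined with Stirling's formula $V_{d-1}/V_d\sim\sqrt{d/(2\pi)}$ for the common prefactor $2(d-1)V_{d-1}/(dV_d)$. The weights $(1-z^2)^{(d-3)/2}$ and $\sin(\theta)^{d-2}$ concentrate exponentially as $d\to\infty$, so the main issue in each case is identifying the correct boundary layer.

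For $T_+(d)$, the function $g(x):=1-\ln(1+x)/x$ satisfies $g(x)\sim x/2$ at $0$ and $g(x)\to 1$ at $\infty$, so $g(2^dz^d)$ develops a sharp transition at $z=1/2$ on scale $1/d$. I would rescale $z=1/2+v/d$ and use the expansions $(1-z^2)^{(d-3)/2}=(3/4)^{(d-3)/2}e^{-2v/3}(1+o(1))$ and $2^dz^d\to e^{2v}$, with dominated-convergence control supplied by the elementary bounds $(1-z^2)^{1/2}\cdot(2z)\le\sqrt{3}/2$ on $[0,1/2]$ and $(1-z^2)^{1/2}\le\sqrt{3}/2$ on $[1/2,1]$. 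This reduces matters to evaluating
\[
J:=\int_{-\infty}^{\infty}e^{-2v/3}\bigl(1-e^{-2v}\ln(1+e^{2v})\bigr)\,dv.
\]
Substituting $y=e^{2v}$ gives $J=\tfrac12\int_0^\infty y^{-7/3}(y-\ln(1+y))\,dy$; integration by parts with antiderivative $-\tfrac{3}{4}y^{-4/3}$ (both boundary terms vanish because $y-\ln(1+y)\sim y^2/2$ at $0$ and $\sim y$ at $\infty$) reduces this to the beta-type integral $\tfrac{3}{4}\int_0^\infty y^{-1/3}/(1+y)\,dy=\tfrac{3\pi}{4\sin(2\pi/3)}=\sqrt{3}\pi/2$, so $J=\pi\sqrt{3}/4$. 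Combining this with $(3/4)^{(d-3)/2}=(4/3)^{3/2}(\sqrt{3}/2)^d$ and the Stirling asymptotics for $V_{d-1}/V_d$ yields \eqref{eq:asT+}.

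For $T_-(d)$, non-negativity of the integrand is immediate from $\lune\le V_dz^d$ (intersection volumes are non-negative) and the monotonicity of $F$. For the upper bound I would apply the mean value theorem,
\[
0\le F\bigl(\lune(u^{1/d},\theta)/V_d\bigr)-F(u)\le \sup_{w\in[\lune/V_d,\,u]}|F'(w)|\cdot\bigl(u-\lune(u^{1/d},\theta)/V_d\bigr),
\]
and insert the bound from Lemma~\ref{lem:llb} on the normalized intersection volume $u-\lune(u^{1/d},\theta)/V_d$. The Gaussian-type concentration of $\sin(\theta)^{d-2}$ on scale $1/\sqrt d$ around $\theta=\pi/2$ interacts with the $d$-dependence of Lemma~\ref{lem:llb}, and a joint optimization over $\theta$ and $z=u^{1/d}$ of the resulting exponential rate should produce the base $4\sqrt{3}/9$.

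The main obstacle is the $T_-$ analysis: the exponent $4\sqrt{3}/9$ is not simply the product of $\sin(\theta)^{d-2}$'s concentration at $\theta=\pi/2$ with Lemma~\ref{lem:llb}'s rate there, but comes from a genuine joint optimization over $(\theta,z)$, and one must also keep $|F'(w)|$ uniformly controlled over the relevant interval of $w$ to avoid blowing up the estimate. The $T_+$ analysis is comparatively routine once the clean evaluation of $J$ above is carried out.
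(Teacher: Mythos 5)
Your proof of the $T_+$ asymptotic is correct and, in one respect, cleaner than the paper's: you resolve the boundary layer at $z=1/2$ symmetrically in a single rescaling $z=1/2+v/d$, arriving at the two-sided integral $J=\int_{-\infty}^{\infty}e^{-2v/3}(1-e^{-2v}\ln(1+e^{2v}))\,dv$, whereas the paper handles the two half-lines $z\in(0,1/2)$ and $z\in(1/2,1)$ separately (getting $\frac{\pi\sqrt3-3}{8}$ and $\frac{\pi\sqrt3+3}{8}$ and adding). Your evaluation of $J$ by integration by parts followed by $\int_0^\infty \frac{y^{-1/3}}{1+y}\,dy=\pi/\sin(2\pi/3)$ is a genuine improvement over the paper, which just says the final integral is ``obtained by a standard computer algebra system.'' I checked the arithmetic: $J=\sqrt3\pi/4$ and, combined with $(3/4)^{(d-3)/2}=(8/3\sqrt3)(\sqrt3/2)^d$ and $2(d-1)V_{d-1}/(dV_d)\sim\sqrt{2d/\pi}$, this reproduces $\frac{2\sqrt{2\pi}}{3}\frac{1}{\sqrt d}(\sqrt3/2)^d$. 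The dominated-convergence hints you give (the bounds $2z\sqrt{1-z^2}\le\sqrt3/2$ on $[0,1/2]$, $\sqrt{1-z^2}\le\sqrt3/2$ on $[1/2,1]$, together with $g(y):=1-\ln(1+y)/y\le\min(1/2,y/2)$) are the right ingredients, though you stop short of assembling them into an integrable dominating function for $v<0$, where $(1-z^2)^{(d-3)/2}$ grows in $|v|$; this is a routine but necessary step.

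The $T_-$ part, however, is a plan rather than a proof, and your own closing paragraph concedes this. Two concrete issues. First, the mean-value bound with a global $\sup|F'|$ cannot work by itself: $|F'|\le 2/3$ is bounded, but $u-\lune(u^{1/d},\theta)/V_d$ (the intersection volume) tends to $1$ as $u\to\infty$ for $\theta$ fixed, so $\int^\infty(u-\lune/V_d)\,du$ diverges. You need the decay of $|F'(w)|$ for $w$ large, i.e.\ a bound like $F(u-\epsilon)-F(u)\lesssim \epsilon\,\ln u/u^3$ for $u\ge 1.1$ (this is Lemma~\ref{lem:Fub}(3) of the paper), applied only where the argument of $F$ stays bounded away from $0$; near $u=0$ or when $\lune/V_d$ vanishes one must fall back on the crude bound $1/2$. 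Second, your heuristic that the base $4\sqrt3/9$ comes from ``concentration of $\sin(\theta)^{d-2}$ around $\theta=\pi/2$ interacting with Lemma~\ref{lem:llb}'' is misleading: the extremal contribution in the paper comes from $\theta\approx\pi/3$ (where $2\cos\theta\approx1$, so the boundary $u=(2\cos\theta)^d$ sits at $u\approx1$), and the rate $4\sqrt3/9=\sqrt{16/27}$ is the maximum of $\bigl(\tfrac{\sin^4\theta}{2(1-\cos\theta)}\bigr)^{1/2}$, not anything centred at $\pi/2$. Carrying out the ``joint optimization'' you allude to in fact forces a region-by-region decomposition (the paper uses twelve pieces), because Lemma~\ref{lem:llb}(2) is only sharp when $u\in[(2\cos\theta)^d,C(\theta)^d]$ and one needs a matching bound with the right exponential rate on each complementary region. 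None of this is supplied, so as written the $T_-$ claim is not established.
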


\begin{remark}
  We expect that the asymptotic exponential order of $T_-$ is the good one, in the sense that, $T_-(d)$ should be a  $\Omega \left( \frac{1}{d} \left(\frac{4\sqrt{3}}{9} \right)^d \right)$.
\end{remark}

\paragraph{For unlabelled trees?}
The same statistic works for unlabelled rooted NNT. Indeed, with the knowledge of the root, we can find the antecedent of a node and so we can compute the mean number of siblings of the tree.\par
For unlabelled unrooted NNT, the statistic we can compute is the mean of the square of degrees of nodes. Indeed, for any labelled tree $T_n$,
\begin{equation}
  \frac{1}{n} \sum_{i=1}^n \text{deg}(i)^2 = \frac{1}{n} \left(\sum_{i=2}^n s(i) + 4 (n-1) - 2 \,\text{deg}(1)  \right).
\end{equation}
As $\deg(1) = O(\log(n))$ when $n \to \infty$ see~\cite[Theorem~1.6]{LM21}, the asymptotic mean of the square of degrees converges to $S_d + 4$. This statistic does not need the knowledge of the labels, nor of the root and so can be computed on unlabelled unrooted NNT.

\paragraph{Content:} In Section~\ref{sec:1fini}, we prove Theorem~\ref{thm:1fini}. We recall first the computation in the case of RRT and then we compute the case $d=1$ by recalling the local limit of the $d$-NNT. In Section~\ref{sec:dim}, we prove Theorem~\ref{thm:main}. In Sections~\ref{sec:asT+} and~\ref{sec:asT-}, we prove Theorem~\ref{thm:as} by showing respectively the asymptotic behaviours of $T_+$ and $T_-$.

\section{Siblings in RRT and in $1$-NNT} \label{sec:1fini}
\subsection{Siblings in RRT} \label{sec:sibRRT}
Let $n$ be any integer number. Let $i$ be a uniform node in $\{1,\dots,n\}$, the asymptotic mean number of siblings in the RRT is
\begin{align*}
  S_\infty = \lim_{n \to \infty} \sum_{i=2}^n \underbrace{\frac{1}{n}}_{\text{\footnotesize $i$ is uniformly chosen}} \sum_{j=1}^{i-1} \underbrace{\frac{1}{i-1}}_{\text{\footnotesize $j=A(i)$ is uniform on $\{1,\dots,i-1\}$}} \sum_{k=j+1}^n \underbrace{\frac{1}{k-1} \ind{k \neq i}.}_{\underset{\text{\footnotesize to be a child of $j$}}{\text{\footnotesize probability for $k>j$ and $k \neq i$}}}
\end{align*}
Normalised in $n$ that gives
\begin{displaymath}
  S_\infty = \int_{0}^1 \di x \int_{0}^{x} \frac{1}{x} \di y \int_{y}^1 \frac{1}{z} \di z = - \int_0^1 \frac{1}{x} \di x \int_0^x \di y \ln(y) = \int_0^1 (1-\ln(x)) \di x = 2.
\end{displaymath}

\subsection{Local limit of $d$-NNT} \label{sec:ll}
In~\cite[Theorem~1.10]{LM21}, Lichev and Mitsche proved that a $d$-NNT re-rooted at a uniform random vertex admits a local limit (this is the so-called Benjamini--Schramm limit) which is a random pointed infinite one-ended tree, called Poisson $d$-NN random tree. The Poisson $d$-NN random tree is defined in the following: let $\mathcal{P}$ be a Poisson Point Process in $\RR^d$ and sample $(X_v)_{v \in \mathcal{P}}$ a collection of independent uniform random variables on $[0,1]$ (considered as arrival times), then connect each point $v \in \mathcal{P}$ to its closest older neighbour $A(v) = \text{argmin} \{u \in \mathcal{P} \cap \{u : X_u < X_v\}  : \delta(X_u,X_v)\}$. In particular, we have concentration and convergence of all local statistics. A drawing of the local limit in dimension $2$ is given in Figure~\ref{fig:ll2}.
\begin{figure}
  \begin{center}
    \includegraphics[scale=0.8]{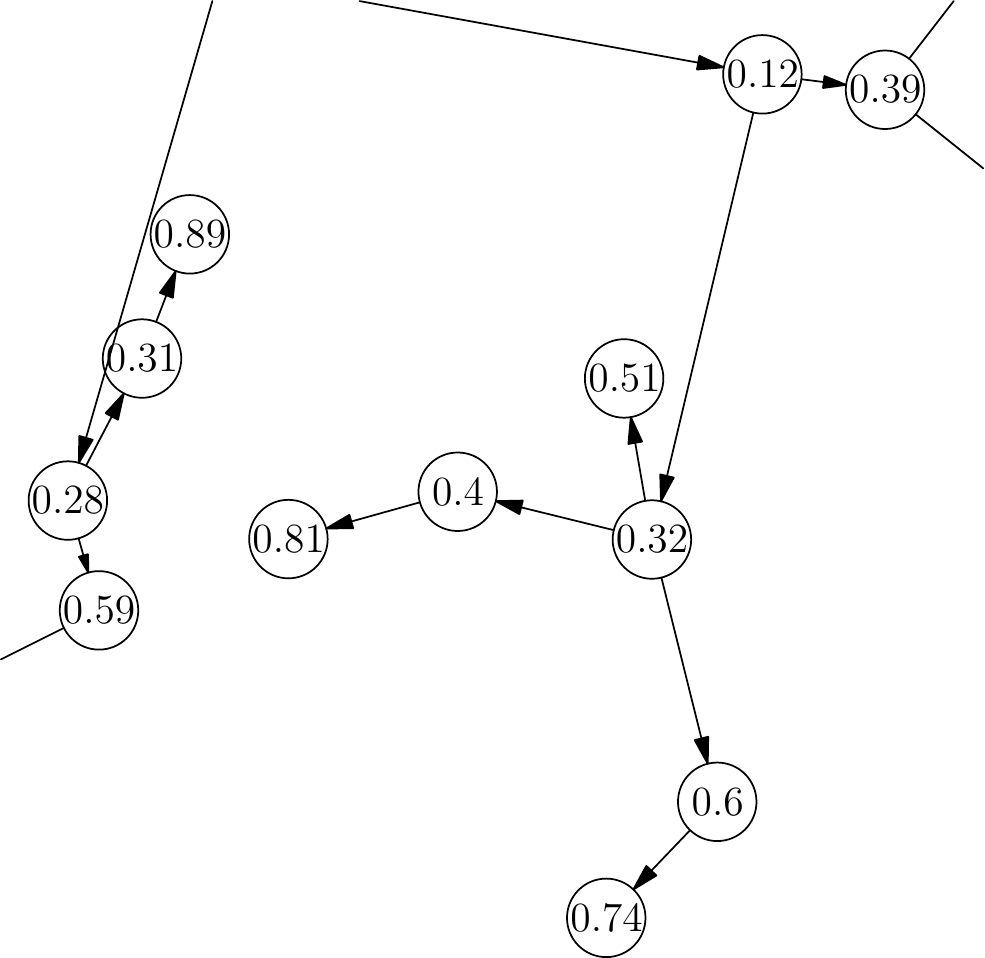}
  \end{center}
  \caption{Local limit of the $2$-NNT. Each circle represents a point of the PPP. In each circle, its random arrival time $X_v$ is indicated. Arrows go from $A(v)$ to $v$.} \label{fig:ll2}
\end{figure}

\subsection{Siblings in dimension $1$} \label{sec:1}
Thanks to the local limit defined in~\ref{sec:ll}, see also Figure~\ref{fig:sibling1}, the limit of the expected number of siblings is
\begin{align}
  S_1 & = \int_{0}^1 \underbrace{\di u_1}_{\text{label of $u_1$ is uniform}} \int_{0}^{\infty} \underbrace{\di r\ 2 u_1 e^{-2 u_1 r}}_{\text{distance between $u_1$ and $A(u_1)$}} \int_{0}^{u_1} \underbrace{\frac{1}{u_1} \di v}_{\text{label of $A(u_1)$ is uniform in $[0,u_1]$}}\\
      & \qquad \underbrace{\left(\underbrace{\int_{-\infty}^{-r} \di x \int_{v}^{u_1} \di u_2 e^{2 x u_2}}_{\text{blue area}} + \underbrace{\int_{r/2}^{r} \di x \int_{u_1}^{1} \di u_2 e^{-2(r-x)(u_2-u_1)}}_{\text{red area}} + \underbrace{\int_{r}^\infty \di x \int_{v}^{1} \di u_2 e^{-2(x-r)u_2}}_{\text{green area}}\right)}_{\text{expected number of children of $A(u_1)$ without $u_1$}}.
\end{align}

Let take some times to explain how we find the integral for the colored areas:
\begin{itemize}
\item A point $(x,u_2)$ with $x \leq -r$ (in the blue area of Figure~\ref{fig:sibling1}) is a child of $(r,v)$ if its label $u_2 \in [v,u_1]$. Indeed, $u_2$ must be greater than $v$ to be a child of $(r,v)$ and lesser than $u_1$ to not be a child of $(0,u_1)$. Moreover, the ball of centre $x$ that goes through $r$ must be free of points of label lesser than $u_2$. Indeed, if such a point exists, then it is the ancestor of $(x,u_2)$, and so it is not $(r,v)$; this ball is the segment $[2x-r,r]$, but we know that $[-r,r]$ is free of such points, then we just need to compute the probability that $[2x-r,-r]$ does not contain a point of label less than $u_2$. As the repartition of points is a PPP, this probability values $e^{2xu_2}$.
\item A point $(x,u_2)$ with $r/2 \leq x \leq r$ (in the red area of Figure~\ref{fig:sibling1}) is a child of $(r,v)$ if its label $u_2 \in [u_1,1]$. Indeed, $u_2$ must be greater than $v$ to be a child of $(r,v)$, but points in the segment $[-r,r]$ have their labels greater than $u_1 \leq v$. Moreover, the ball of centre $x$ that goes through $r$ must be free of points of label lesser than $u_2$, knowing already that it is free of points of label greater than $u_1$; this ball is the segment $[2x-r,r]$. As the repartition of points is a PPP, this probability values $e^{-2(r-x)(u_2-u_1)}$.
\item A point $(x,u_2)$ with $r \leq x$ (in the green area of Figure~\ref{fig:sibling1}) is a child of $(r,v)$ if its label $u_2 \in [v,1]$. Moreover, the ball of centre $x$ that goes through $r$ must be free of points of label lesser than $u_2$; this ball is the segment $[r,2x-r]$. As the repartition of points is a PPP, this probability values $e^{-2(x-r)u_2}$.
\end{itemize}

\begin{figure}
    \centering
    \includegraphics{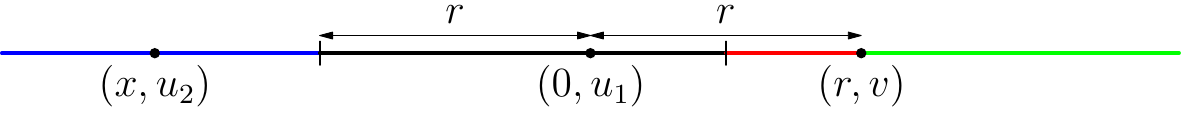} 
    \caption{All the points in $[-r,r]$ (black and red areas) have a label greater than $u_1$. Blue area: a point $(x,u_2)$ is a child of $(r,v)$ iff the label $u_2 \in [v,u_1]$ and if there does not exist a point in $[2x-r,r]$ whose label is in $[0,u_2]$, but for sure such a point can not exist in $[-r,r]$. Black area: no point can be a child of $(r,v)$. Red area: a point $(x,u_2)$ is a child of $(r,v)$ iif $u_2 \in [u_1,1]$ and there does not exist a point in $[2x-r,r]$ with a label in $[u_1,v]$. Green area: a point $(x,u_2)$ is a child of $(r,v)$ iff the label $u_2 \in [v,1]$ and there does not exist a point in $[r,2x-r]$ whose label is in $[0,u_2]$.}
    \label{fig:sibling1}
\end{figure}

Another way to compute it is to count the older siblings and multiply by $2$. We adopt this approach because it is the one used later in any dimension.
\begin{align}
  S_1 & = 2 \int_{0}^1 \di u_1  \int_{0}^{\infty} \di r\ 2 u_1 e^{-2 u_1 r} \int_{0}^{u_1} \frac{1}{u_1} \di v \left(\int_{-\infty}^{-r} \di x \int_{v}^{u_1} \di u_2 e^{2 x u_2} + \int_{r}^\infty \di s \int_{v}^{u_1} \di u_2 e^{-2(x-r)u_2}  \right).
\end{align}

To compute this, we first integrate on  $x$,
\begin{displaymath}
  S_1 = 4 \int_{0}^1 \di u_1 \int_{0}^{u_1} \di v \int_{v}^{u_1} \di u_2 \int_{0}^{\infty} \di r\ e^{-2 u_1 r} \left( \frac{e^{-2ru_2}}{2u_2} + \frac{1}{2u_2} \right).
\end{displaymath}
Then, on $r$,
\begin{displaymath}
  S_1 = 2 \int_{0}^1 \di u_1 \int_{0}^{u_1} \di v \int_{v}^{u_1} \di u_2 \frac{1}{u_2} \left( \frac{1}{2(u_1+u_2)} + \frac{1}{2u_1} \right). 
\end{displaymath}
Now, we decompose in partial fractions and we conclude by integrating successively on $u_2$, $v$ and $u_1$,
\begin{align*}
  S_1 & = \int_{0}^1 \di u_1 \frac{1}{u_1}\int_{0}^{u_1} \di v \int_{v}^{u_1} \di u_2 \, \left(\frac{2}{u_2} - \frac{1}{u_1+u_2} \right)\\
      & = \int_{0}^1 \di u_1 \frac{1}{u_1}\int_{0}^{u_1} \di v \left(2 \ln(u_1) - 2 \ln(v) - \ln(2u_1) + \ln(u_1+v) \right)\\
      & = \int_{0}^1 \di u_1 \frac{1}{u_1}\int_{0}^{u_1} \di v \left(\ln(u_1) - 2 \ln(v) - \ln(2) + \ln(u_1+v) \right)\\
      & = \int_{0}^1 \di u_1 \frac{1}{u_1} \big(u_1 \ln u_1 - 2 u_1 (\ln(u_1)- 1) - u_1 \ln(2) + u_1 (\ln(u_1) - 1 + 2 \ln 2)\big)\\
      & = \int_{0}^1 \di u_1 \left(1 + \ln 2\right) = 1+\ln(2). \qed
\end{align*}

\section{Proof of Theorem~\ref{thm:main}} \label{sec:dim}
In the following, $V_d$ denotes the volume of the unit ball of dimension $d$, i.e.\ $V_d = \frac{\pi^{d/2}}{\Gamma(d/2+1)}$ and $A_{d-1} = \frac{2 \pi^{d/2}}{\Gamma(d/2)}$ denotes the area of its surface. We recall that $A_{d-1}= d V_d$.

\paragraph{From local limit to Equation~\eqref{eq:siblings}:}
As, in dimension $1$, we use the local limit given in Section~\ref{sec:ll}, the mean number of siblings is
\begin{align*} \label{eq:sibling}
  S_d = & 2 \int_0^1 \underbrace{\di u_1}_{\text{label of $u_1$ is uniform}} \int_{0}^\infty \underbrace{\di r\, d\,V_d u_1 r^{d-1} e^{-V_d u_1 r^d}}_{\text{distance between $u_1$ and $A(u_1)$}} \int_{0}^{u_1} \underbrace{\frac{1}{u_1}\di v}_{\text{label of $A(u_1)$ is uniform in $[0,u_1]$}} \\
        & \qquad \underbrace{\int_{\RR^d \backslash B(0,r)} \di z}_{\text{no older sibling of $u_1$ could be in $B(0,r)$}} \int_{v}^{u_1} \di u_2 \underbrace{e^{-u_2 \lune_r(z)}}_{\text{probability to be an older sibling of $u_1$}}.
\end{align*}
The first $2$ comes because, in the integral above, we count only the older siblings. In addition, the probability $e^{-u_2 \lune(z_2)}$ to be an older sibling of $u_1$ for a point of label $u_2 \in [v,u_1]$ at position~$z$ is to have no point older than it in the ball whose centre is $z$ and passing from the point of label~$v$, but remember that such a point could not exist in the ball of centre $0$ and radius $r$. Hence, the quantity~$\lune_r(z)$ denotes the volume of the ball centred in $z$ and that goes through $(r,0,\dots,0)$ where we exclude the part in the ball centred in $0$ and of radius $r$:
\begin{equation}
  \lune_r(z) = \text{Vol}\left(B(z,|z-(r,0,\dots,0)|) \backslash B(0,r)\right).
\end{equation}
  
  Now, we compute this integral in any dimension $d \geq 2$. Firstly, we make a change of coordinates passing from $z \in \RR^d$ to $(s,\theta)$ where $s$ is the distance between $z$ and $(r,0,\dots,0)$ and $\theta$ the angle between the two lines $(z,(r,0,\dots,0))$ and $(0,(r,0,\dots,0))$ whose Jacobian is $s \sin(\theta)^{d-2}$ from passing Cartesian coordinates to spherical coordinates. In the same way, we circularly integrate around the $x$-axis.
  \begin{align} 
    S_d & = 2 d V_d \int_0^1 \di u_1 \int_{0}^\infty \di r\, r^{d-1} e^{-V_d u_1 r^d} \int_{0}^{u_1} \di v \int_{0}^\pi \di \theta \int_{s(\theta)}^\infty \di s\, \underbrace{s \sin(\theta)^{d-2}}_{\text{Jacobian}} \underbrace{A_{d-2} s^{d-2}}_{\text{volume of points $(s,\theta)$}}  \int_{v}^{u_1} \di u_2 e^{-u_2 \lune_r(s,\theta)} \nonumber \\
        & =  2 d A_{d-2} V_d \int_{0}^\pi \di \theta \, \sin(\theta)^{d-2} \int_0^1 \di u_1 \int_{0}^{u_1} \di v \int_{v}^{u_1} \di u_2 \int_{0}^\infty \di r\, r^{d-1} e^{-V_d u_1 r^d}   \int_{s(\theta)}^\infty \di s \, s^{d-1}  e^{-u_2 \lune_r(s,\theta)} \label{eq:sibling2}
  \end{align}
  where $s(\theta) = \begin{cases} 2 r \cos(\theta) & \text{if } \theta \in [0,\pi/2] \\ 0 & \text{if } \theta \in [\pi/2,\pi] \end{cases}$.
  
  Now, we do the change of variable $s = z r$ to look at the relative distances. Hence,
  \begin{align*}
    \int_{0}^\infty \di r\, r^{d-1} e^{-V_d u_1 r^d}   \int_{s(\theta)}^\infty \di s \, s^{d-1}  e^{-u_2 \lune_r(s,\theta)} & = \int_{0}^\infty \di r\, r^{d-1} e^{-V_d u_1 r^d} \int_{z(\theta)}^\infty \di z \, z^{d-1} r^{d}  e^{-u_2 \lune_1(z,\theta) r^d}\\ & = \int_{z\theta)}^\infty \di z\, z^{d-1} \int_{0}^\infty \di r\, r^{2d-1} e^{-(u_1 V_d + u_2 \lune_1(z,\theta)) r^d}\\ & = \frac{1}{d} \int_{z(\theta)}^\infty \di z\, z^{d-1}\frac{1}{(u_1 V_d + u_2 \lune(z,\theta))^2}
  \end{align*}
where $z(\theta) = (2 \cos \theta)_+$. In the following, $\lune$ will stay for $\lune_1$.
  
  Now, we can go on with the computation of Equation~\eqref{eq:sibling2} by integrating on $u_2$, $v$ and $u_1$,
  \begin{align*}
    & \int_0^1 \di u_1 \int_{0}^{u_1} \di v \int_{v}^{u_1} \di u_2 \frac{1}{(u_1 V_d + u_2 \lune(z,\theta))^2}\\
    & = \int_0^1 \di u_1 \int_{0}^{u_1} \di v \, \frac{u_1-v}{u_1(V_d+\lune(z,\theta))(v\lune(z,\theta) + u_1 V_d)}\\
    & = \frac{1}{V_d+\lune(z,\theta)} \int_0^1 \di u_1 \frac{1}{u_1} \, \int_{0}^{u_1} \di v \, \frac{u_1-v}{v\lune(z,\theta) + u_1 V_d}\\
    & = \frac{1}{V_d+\lune(z,\theta)} \int_0^1 \di u_1  \frac{1}{\lune(z,\theta)} \left(\frac{\lune(z,\theta)+V_d}{\lune(z,\theta)} \ln \left( \frac{\lune(z,\theta)+V_d}{V_d} \right) - 1 \right)\\
    & = \frac{1}{\lune(z,\theta)^2} \ln \left( \frac{\lune(z,\theta)+V_d}{V_d} \right) - \frac{1}{\lune(z,\theta) (\lune(z,\theta)+V_d)} \\
    & = \frac{1}{V_d^2} \frac{1}{\frac{\lune(z,\theta)}{V_d}} \left( \frac{1}{\frac{\lune(z,\theta)}{V_d}} \ln \left( 1 + \frac{\lune(z,\theta)}{V_d} \right) - \frac{1}{1 + \frac{\lune(z,\theta)}{V_d}} \right).
  \end{align*}
  Hence,
  \begin{displaymath}
    S_d = 2 \frac{(d-1) V_{d-1}}{V_d}\int_{0}^\pi \di \theta \, \sin(\theta)^{d-2} \int_{z(\theta)}^\infty \di z\, z^{d-1} \frac{1}{\frac{\lune(z,\theta)}{V_d}} \left( \frac{1}{\frac{\lune(z,\theta)}{V_d}} \ln \left( 1 + \frac{\lune(z,\theta)}{V_d} \right) - \frac{1}{1 + \frac{\lune(z,\theta)}{V_d}} \right),
  \end{displaymath}
  that is Equation~\eqref{eq:siblings} of Theorem~\ref{thm:main}.
  
  \paragraph{Exact computation of $\lune(z,\theta)$:}
  The volume of the intersection of two balls, one of radius $1$ and the other of radius $z$ where centres are at distance $\delta$ (see Figure~\ref{fig:inter}) is, in dimension $d$,
  \begin{displaymath}
    I = \int_{\frac{1-z^2+\delta^2}{2 \delta}}^{1} \di x \, V_{d-1} (1-x^2)^{(d-1)/2} + \int_{-z}^{\frac{1-z^2-\delta^2}{2 \delta}} \di x \, V_{d-1} (z^2-x^2)^{(d-1)/2}.
  \end{displaymath}

  \begin{figure}
    \begin{center}
      \includegraphics{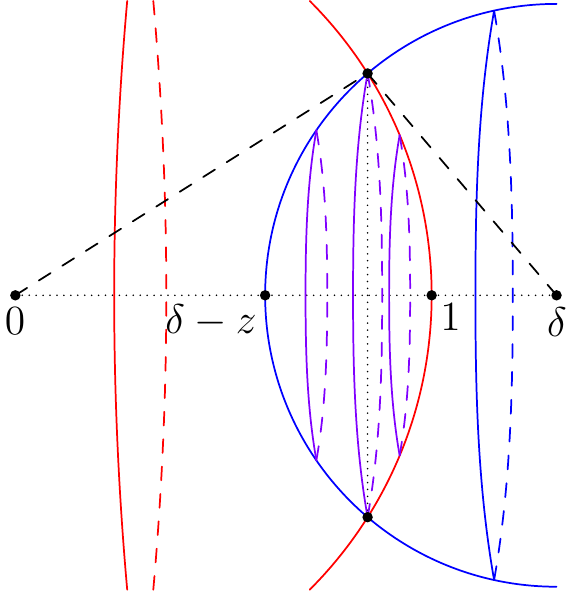}
    \end{center}
    \caption{The drawing when $d=3$. In red, the ball of radius $1$; in blue, the one of radius $z$. In purple, we denote the intersection. The volume $I$ of the intersection is obtained by integrating the volume of the purple disks, that become $(d-1)$-dimensional balls for the intersection of two $d$-dimensional balls.} \label{fig:inter}
  \end{figure}
  
  In our cases, $\delta = \sqrt{1+z^2 -2 z \cos \theta}$ by the law of cosines. Remarking that $\frac{\lune(z,\theta)}{V_d} = \frac{V_d z^d - I}{V_d}$, it gives
  \begin{align*}
    \frac{\lune(z,\theta)}{V_d} & = z^d - \frac{V_{d-1}}{V_d} \left( \int_{\frac{1-z^2+(1+z^2-2z \cos\theta)}{2 \sqrt{1+z^2 -2 z \cos \theta}}}^{1} (1-x^2)^{\frac{d-1}{2}}\, \di x + \int_{-z}^{\frac{1-z^2-(1+z^2-2z\cos\theta)}{2 \sqrt{1+z^2-2z\cos \theta}}} (z^2-x^2)^{\frac{d-1}{2}}\, \di x\right)\\
                                & =  z^d - \frac{V_{d-1}}{V_d} \left( \int_{\frac{2(1-z \cos\theta)}{2 \sqrt{1+z^2 -2 z \cos \theta}}}^{1} (1-x^2)^{\frac{d-1}{2}}\, \di x + z^{d-1} \int_{-z}^{\frac{2z(\cos(\theta) -z)}{2 \sqrt{1+z^2-2z\cos \theta}}} (1-(x/z)^2)^{\frac{d-1}{2}}\, \di x\right).
  \end{align*}
  After the change of variable $y=x/z$ in the second term of the sum, we find equation~\eqref{eq:lune}.

  \paragraph{Difference of $S_d$ with $2$:}
  First, remark than
  \begin{align*}
    2 & = \frac{2d}{\sqrt{\pi}} \frac{\Gamma(d/2)}{\Gamma((d-1)/2)} \cdot \sqrt{\pi} \frac{\Gamma((d-1)/2)}{\Gamma(d/2)} \cdot \frac{1}{d}\\
      & = 2\, \frac{(d-1)V_{d-1}}{V_d} \cdot \int_{0}^\pi \di \theta \, \sin(\theta)^{d-2} \cdot \int_{0}^\infty \di z \, z^{d-1} F\left( z^d \right).
  \end{align*}
  
  Hence, the difference $2-S_d$ could be written as the difference of two positive terms
  \begin{align*}
    2-S_d & = \underbrace{2\ \frac{(d-1)V_{d-1}}{V_d} \int_{0}^{\pi/2} \di \theta \, \sin(\theta)^{d-2} \int_{0}^{2 \cos \theta} \di z\, z^{d-1} F(z^d)}_{T_+(d)} \\
          & \quad - \underbrace{2\ \frac{(d-1)V_{d-1}}{V_d} \int_{0}^{\pi} \di \theta \sin(\theta)^{d-2} \int_{z(\theta)}^\infty \di z \, z^{d-1} \left(F\left(\frac{\lune(z,\theta)}{V_d}\right)-F(z^d)\right)}_{T_{-}(d)}.
  \end{align*}
  
  Now, we find an alternative written of $T_+(d)$. We start with the change of variable $u = z^d$ to simplify $T_+(d)$:
  \begin{align*}
    T_+(d) & = 2\ \frac{(d-1)V_{d-1}}{d V_d} \int_{0}^{\pi/2} \di \theta \, \sin(\theta)^{d-2} \int_{0}^{2^d (\cos \theta)^d} \di u \, F(u)\\
           & = 2\ \frac{(d-1)V_{d-1}}{d V_d} \int_{0}^{\pi/2} \di \theta \, \sin(\theta)^{d-2} \left[\frac{-\ln(1+u)}{u} \right]_0^{2^d (\cos \theta)^d} \\
           & = 2\ \frac{(d-1)V_{d-1}}{d V_d}  \int_{0}^{\pi/2} \di \theta \, \sin(\theta)^{d-2} \left(1 - \frac{\ln(1+2^d (\cos \theta)^d)}{2^d (\cos \theta)^d}\right)
  \end{align*}\
  and a second change of variable $z = \cos \theta$ to obtain
  \begin{displaymath}
    T_+(d) = 2\ \frac{(d-1)V_{d-1}}{d V_d} \int_0^1 \di z \, \left(1-z^2\right)^{\frac{d-3}{2}} \left(1 - \frac{\ln(1+2^d z^d)}{2^d z^d}\right). \qed
  \end{displaymath}

\section{Asymptotic behaviour of $T_+$ (proof of Equation~\eqref{eq:asT+} in Theorem~\ref{thm:as})} \label{sec:asT+}
To study the asymptotic of $T_+$, we just study the one of
\begin{itemize}
\item $\displaystyle 2 \frac{(d-1)V_{d-1}}{d V_d} = \frac{2d}{\sqrt{\pi}} \frac{\Gamma(d/2)}{\Gamma((d-1)/2)}$ whose asymptotic is $\sqrt{\frac{2d}{\pi}} + O(d^{-1/2})$,
\item $\displaystyle \int_0^{1/2} \di z \, \left(1-z^2\right)^{\frac{d-3}{2}} \left(1 - \frac{\ln(1+2^d z^d)}{2^d z^d} \right)$, and
\item $\displaystyle \int_{1/2}^{1} \di z \, \left(1-z^2\right)^{\frac{d-3}{2}} \left(1 - \frac{\ln(1+2^d z^d)}{2^d z^d} \right)$.
\end{itemize}

Asymptotics of both last integrals are given in the following lemma.
\begin{lemma} \label{lem:asT+}
  As $d \to \infty$,
  \begin{equation} \label{eq:0d}
    \int_0^{1/2} \di z \, \left(1-z^2\right)^{\frac{d-3}{2}} \left(1 - \frac{\ln(1+2^d z^d)}{2^d z^d}\right) \sim \frac{\pi \sqrt{3} - 3}{8d} \left(\frac{\sqrt{3}}{2} \right)^{d-3}
  \end{equation}
  and
  \begin{equation} \label{eq:d1}
    \int_{1/2}^{1} \di z \, \left(1-z^2\right)^{\frac{d-3}{2}} \left(1 - \frac{\ln(1+2^d z^d)}{2^d z^d}\right) \sim \frac{\pi\sqrt{3}+3}{8d}  \left(\frac{\sqrt{3}}{2} \right)^{d-3}.
  \end{equation}
\end{lemma}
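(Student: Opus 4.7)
Let $\phi(u) := 1 - \ln(1+u)/u$, so that each integrand factorises as $(1-z^2)^{(d-3)/2} \cdot \phi(2^d z^d)$. The strategy is a Laplace-style analysis near the boundary $z = 1/2$, where both integrands attain their maxima of common order $(3/4)^{(d-3)/2} = (\sqrt{3}/2)^{d-3}$. On $(1/2, 1)$, $\phi(2^d z^d)$ is bounded by $1$ and $(1-z^2)^{(d-3)/2}$ is strictly decreasing, so \eqref{eq:d1} is maximised at $z = 1/2$; on $(0, 1/2)$ the integrand is bounded above by $(1-z^2)^{(d-3)/2}(2^d z^d)/2$ (using $\phi(u) \leq u/2$, easily seen from $\phi(u) = \int_0^1 ux/(1+ux)\, \di x$), and the log-derivative $d/z - (d-3)z/(1-z^2)$ of $(1-z^2)^{(d-3)/2}(2z)^d$ is positive on $(0, 1/\sqrt{2})$, so this envelope also peaks at $z = 1/2$. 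I then substitute $z = \tfrac{1}{2} \pm \tfrac{w}{d}$ with $w \in [0, d/2]$.

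After this substitution, Taylor expansion gives, for each fixed $w$,
\begin{align*}
(1-z^2)^{(d-3)/2} \to (\sqrt{3}/2)^{d-3}\, e^{\mp 2w/3}, \qquad 2^d z^d = (1 \pm 2w/d)^d \to e^{\pm 2w}.
\end{align*}
To justify passage to the limit uniformly in $d$, the inequalities $\log(1+x) \leq x$ and $\phi(u) \leq u/2$ yield the envelope $(1-z^2)^{(d-3)/2} \leq (\sqrt{3}/2)^{d-3}\,e^{-w/3}$ on $(1/2,1)$ for $d \geq 6$, and the combined envelope $(1-z^2)^{(d-3)/2}\,\phi(2^d z^d) \leq \tfrac{1}{2}(\sqrt{3}/2)^{d-3}\,e^{-4w/3}$ on $(0,1/2)$. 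Together with $\di z = \pm \di w/d$, dominated convergence produces
\begin{align*}
\text{LHS of \eqref{eq:0d}} &\sim \frac{(\sqrt{3}/2)^{d-3}}{d} \int_0^\infty e^{2w/3}\, \phi(e^{-2w})\, \di w, \\
\text{LHS of \eqref{eq:d1}} &\sim \frac{(\sqrt{3}/2)^{d-3}}{d} \int_0^\infty e^{-2w/3}\, \phi(e^{2w})\, \di w.
\end{align*}

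What remains is to evaluate these two constants and match them to $(\pi\sqrt{3} \mp 3)/8$. Using the representation $\phi(u) = \int_0^1 ux/(1+ux)\, \di x$, Fubini, and the substitution $v = e^{\pm 2w}$, each constant becomes $\tfrac{1}{2}\int_0^1 x^{1/3} H_{\pm}(x)\, \di x$ with $H_+(x) = \int_x^\infty y^{-1/3}/(1+y)\, \di y$ and $H_-(x) = \int_0^x y^{-1/3}/(1+y)\, \di y$. Their sum equals $\tfrac{1}{2}\cdot\tfrac{3}{4}\cdot \int_0^\infty y^{-1/3}/(1+y)\, \di y = \pi\sqrt{3}/4$ by the classical identity $\int_0^\infty y^{s-1}/(1+y)\, \di y = \pi/\sin(\pi s)$ at $s = 2/3$; their difference reduces via Fubini to $\tfrac{\pi\sqrt{3}}{4} - \tfrac{3}{4}\int_0^1 (y^{-1/3}-y)/(1+y)\, \di y$, and the substitution $y = u^3$ together with partial fractions for $u/(1+u^3)$ evaluates $\int_0^1 y^{-1/3}/(1+y)\, \di y = \pi\sqrt{3}/3 - \ln 2$, giving the difference $3/4$. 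The principal obstacle is not this final elementary evaluation but securing the uniform envelope above, since $w$ ranges over an interval growing with $d$ and both $(1-z^2)^{(d-3)/2}$ and $\phi(2^d z^d)$ need to be controlled in tandem.
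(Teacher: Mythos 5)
Your proof is correct and follows the same core strategy as the paper: a boundary Laplace expansion at $z=1/2$ via the substitution $z = \tfrac12 \pm \tfrac{w}{d}$, passage to the limit, and evaluation of the resulting constants. Where you differ is in two respects, both of which improve on the paper's presentation. First, the paper simply writes ``when $d\to\infty$'' without supplying a dominating envelope; you make the passage to the limit rigorous by exhibiting the explicit $L^1(\di w)$ majorants $(\sqrt{3}/2)^{d-3}e^{-w/3}$ (on $(1/2,1)$, for $d\geq 6$) and $\tfrac12(\sqrt{3}/2)^{d-3}e^{-4w/3}$ (on $(0,1/2)$), which one checks using $\log(1+x)\leq x$, $\phi(u)\leq u/2$, and $(1-2w/d)^d\leq e^{-2w}$ -- this is a genuine gap in the paper's argument that you close. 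Second, the paper evaluates the two limiting integrals ``by a standard computer algebra system,'' whereas you give an elementary derivation: the kernel representation $\phi(u)=\int_0^1 \tfrac{ux}{1+ux}\,\di x$, the substitution $v=e^{\pm 2w}$, Fubini, and the Mellin identity $\int_0^\infty \tfrac{y^{s-1}}{1+y}\,\di y=\tfrac{\pi}{\sin\pi s}$ at $s=2/3$ reduce the sum to $\pi\sqrt{3}/4$, while the difference reduces, via $\int_0^1\tfrac{y^{-1/3}}{1+y}\,\di y=\tfrac{\pi\sqrt{3}}{3}-\ln 2$ (from $y=u^3$ and partial fractions) and $\int_0^1\tfrac{y}{1+y}\,\di y = 1-\ln 2$, to $3/4$; solving the linear system recovers $(\pi\sqrt{3}\mp 3)/8$. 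I verified both the envelopes and the evaluation; they are sound. One cosmetic remark: your preliminary claim that both integrands ``attain their maxima'' at $z=1/2$ is only approximately true on $(1/2,1)$ (the maximiser can sit at $z=\tfrac12 + O(1/d)$ because $\phi(2^dz^d)$ is increasing there), but since that sentence is only motivation and the actual argument rests on dominated convergence, nothing is affected.
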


This lemma is proved in the two following sections. Hence, putting all together, we obtain
\begin{displaymath}
  T_+(d) \sim \sqrt{\frac{2d}{\pi}} \frac{1}{d} \left(\frac{\sqrt{3}}{2} \right)^{d-3} \left(\frac{\pi\sqrt{3}-3}{8} + \frac{\pi\sqrt{3}+3}{8} \right)= \frac{1}{\sqrt{d}}\frac{\sqrt{6 \pi}}{4} \left(\frac{2}{\sqrt{3}} \right)^3 \left(\frac{\sqrt{3}}{2} \right)^{d}
\end{displaymath}
that is Equation~\eqref{eq:asT+}.

\subsection{When $z \in  (0,1/2)$ (proof of Equation~\eqref{eq:0d})}
  We do the change of variable $u = 1/2 -z$ to get the issue around $0$ and not $1/2$:
  \begin{align*}
    & \int_0^{1/2} \di z \, \left(1-z^2\right)^{\frac{d-3}{2}} \left(1 - \frac{\ln(1+2^d z^d)}{2^d z^d}\right)\\
    & = \int_0^{1/2} \di u \, \left(\frac{3}{4} +u -u^2\right)^{\frac{d-3}{2}} \left(1 - \frac{\ln(1+2^d (1/2-u)^d)}{2^d (1/2-u)^d}\right)\\
    & = \left(\frac{\sqrt{3}}{2}\right)^{d-3} \int_0^{1/2} \di u \, \left(1 + \frac{4}{3} u - \frac{4}{3} u^2 \right)^{\frac{d-3}{2}} \left(1 - \frac{\ln(1+(1-2u)^d)}{(1-2u)^d}\right).
  \end{align*}
  Now, we do the change of variable $u=\frac{x}{d}$, hence
  \begin{align*}
    & \int_0^{1/2} \di u \, \left(1 + \frac{4}{3} u - \frac{4}{3} u^2 \right)^{\frac{d-3}{2}} \left(1 - \frac{\ln(1+(1-2u)^d)}{(1-2u)^d}\right) \\
    & = \frac{1}{d} \int_0^{d/2} \di x \, \left(1 + \frac{4}{3} \frac{x}{d} - \frac{4}{3} \frac{x^2}{d^2} \right)^{\frac{d-3}{2}} \left(1 - \frac{\ln(1+\left(1-\frac{2x}{d}\right)^d)}{\left(1-\frac{2x}{d}\right)^d}\right).
  \end{align*}
  When $d \to \infty$,
  \begin{align*}
    & \int_0^{d/2} \di x \, \left( 1 + \frac{2x}{d} \right)^{\frac{d-3}{2}} \left( 1 - \frac{2x}{3d} \right)^{\frac{d-3}{2}} \left(1 - \frac{\ln(1+(1-\frac{2x}{d})^d)}{(1-\frac{2x}{d})^d}\right) \\
    & \to \int_0^{\infty} \di x \, e^{x}\, e^{-x/3} \left(1 - \frac{\ln(1+e^{-2x})}{e^{-2x}}\right) \text{(Now: $y = e^{-2x/3}$)} \\
    & = \frac{3}{2} \int_{0}^1 \di y\, \frac{1}{y^2} \left(1-\frac{\ln(1+y^3)}{y^3} \right) =  \frac{\pi \sqrt{3} - 3}{8}. 
  \end{align*}
  The computation of the last integral is obtained by a standard computer algebra system. 

  \subsection{When $z \in  (1/2,1)$ (proof of Equation~\eqref{eq:d1})}
  It is very similar to the case when $z \in (0,1/2)$. We do the change of variable $u = z - 1/2$ to get the issue around $0$ and not $1/2$:
  \begin{align*}
    & \int_{1/2}^{1} \di z \, \left(1-z^2\right)^{\frac{d-3}{2}} \left(1 - \frac{\ln(1+2^d z^d)}{2^d z^d}\right)\\
    & = \left(\frac{\sqrt{3}}{2}\right)^{d-3} \int_0^{1/2} \di u \, \left(1 - \frac{4}{3} u - \frac{4}{3} u^2 \right)^{\frac{d-3}{2}} \left(1 - \frac{\ln(1+(1+2u)^d)}{(1+2u)^d}\right)\\
    & = \left(\frac{\sqrt{3}}{2}\right)^{d-3} \int_0^{1/2} \di u \, \left(1 - \frac{4}{3} u - \frac{4}{3} u^2 \right)^{\frac{d-3}{2}} \left(1 - \frac{\ln((1+2u)^d) + \ln \left(1+(1+2u)^{-d}\right)}{(1+2u)^d} \right).
  \end{align*}
  Now, we do the change of variable $u=\frac{x}{d}$, hence we obtain
  \begin{align*}
    \left(\frac{\sqrt{3}}{2}\right)^{d-3}  \frac{1}{d} \int_0^{d/2} \di x \, \left(1 - \frac{4}{3} \frac{x}{d} - \frac{4}{3} \frac{x}{d}^2 \right)^{\frac{d-3}{2}} \left(1 - \frac{\ln \left( \left(1+\frac{2x}{d}\right)^d\right) + \ln \left(1+\left(1+\frac{2x}{d}\right)^{-d} \right)}{\left(1+\frac{2x}{d}\right)^d} \right).
  \end{align*}
  When $d \to \infty$,
  \begin{align*}
    & \int_0^{d/2} \di x \, \left( 1 - \frac{2x}{d} \right)^{\frac{d-3}{2}} \left( 1 + \frac{2x}{3d} \right)^{\frac{d-3}{2}} \left(1 - \frac{\ln \left( \left(1+\frac{2x}{d}\right)^d\right) + \ln \left(1+\left(1+\frac{2x}{d}\right)^{-d} \right)}{\left(1+\frac{2x}{d}\right)^d} \right) \\
    & \to \int_0^{\infty} \di x \, e^{-2x/3} \left(1 - 2x e^{-2x} - e^{-2x}  \ln\left(1+e^{-2x} \right) \right)  \\
    & = \frac{3}{2} \int_{0}^1 \di y\, \left(1 + 3 y^3 \ln y - y^3 \ln(1+y^3) \right) = \frac{\pi\sqrt{3}+3}{8}.
  \end{align*}

\section{Asymptotic behaviour of $T_-$ (proof of Equation~\eqref{eq:asT-} in Theorem~\ref{thm:as})} \label{sec:asT-}
In all this section, the dimension $d$ is supposed to be greater than $2$.

The exact asymptotic of $T_-$ is more complicated that the one of $T_+$. But, we just need an upper bound that is negligible according to the asymptotic of $T_+$. Let us recall that
\begin{displaymath}
 T_-(d) = 2 \frac{(d-1)V_{d-1}}{d V_d} \int_{0}^{\pi} \di \theta \sin(\theta)^{d-2} \int_{z(\theta)^d}^\infty \di u \left(F\left(\frac{\lune(u^{1/d},\theta)}{V_d}\right)-F(u)\right).
\end{displaymath}

The asymptotic of $\displaystyle 2 \frac{(d-1)V_{d-1}}{dV_d} \sim \sqrt{\frac{2d}{\pi}}$ is already known. Hence, we need to prove that
\begin{equation} \label{eq:int}
\int_{0}^{\pi} \di \theta \sin(\theta)^{d-2} \int_{z(\theta)^d}^\infty \di u \left(F\left(\frac{\lune(u^{1/d},\theta)}{V_d}\right)-F(u)\right) = O\left(\frac{1}{\sqrt{d}} \left(\frac{4\sqrt{3}}{9}\right)^d \right).
\end{equation}

To do it, we first give some upper bounds of $F\left(\frac{\lune(u^{1/d},\theta)}{V_d}\right)-F(u)$ in Proposition~\ref{prop:bound} above and, then, we split the integral into $12$ regions. For each of these $12$ regions, we apply one of the upper bounds of Proposition~\ref{prop:bound}. This is done in Sections~\ref{sec:asT0pi4},~\ref{sec:asTpi2pi} and~\ref{sec:asTpi4pi2}. In Figure~\ref{fig:decoupage}, the $12$ regions are represented.\par

\begin{figure}
  \begin{center}
    \includegraphics{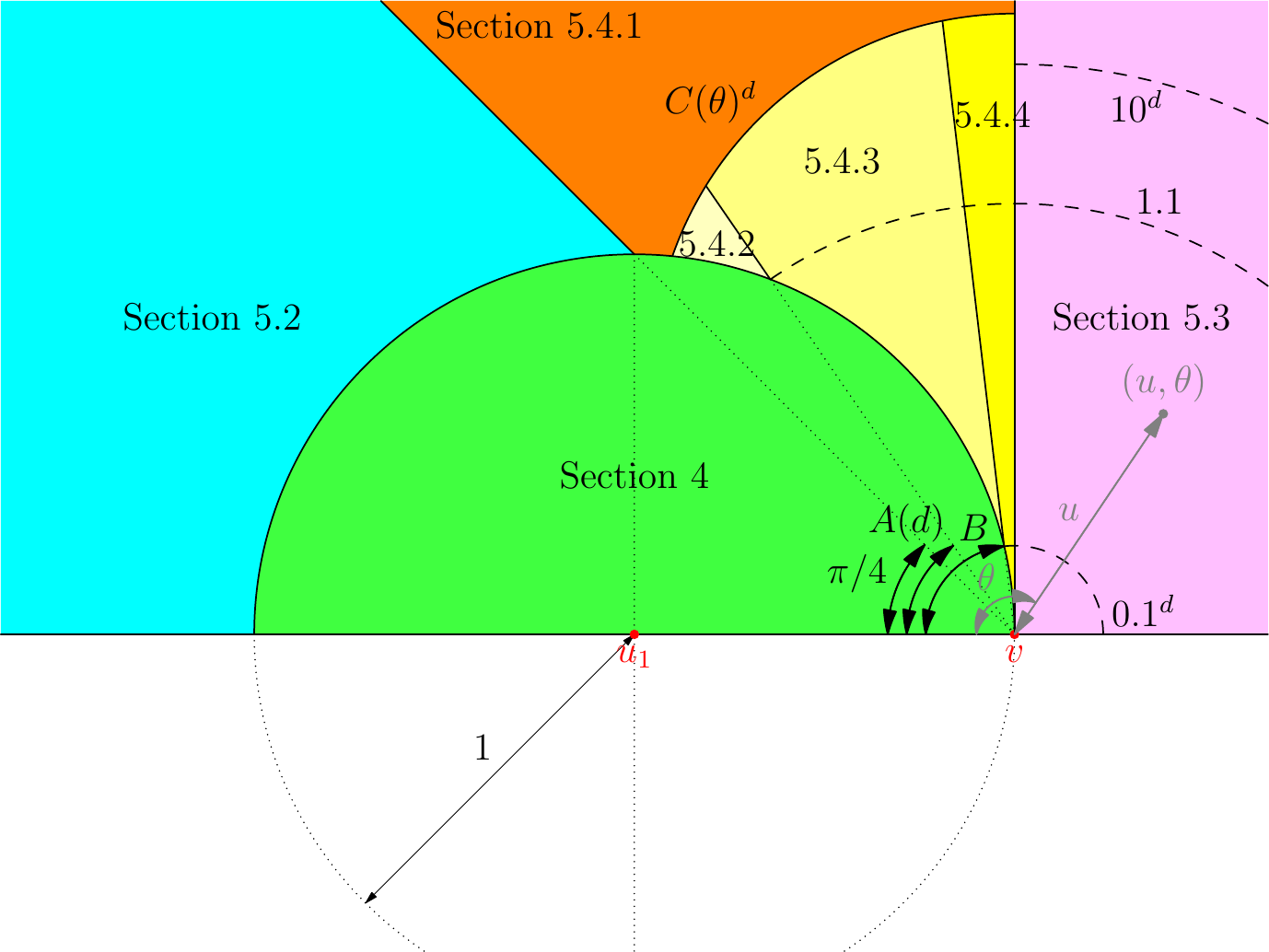}
  \end{center}
  \caption{Each coloured region corresponds to a section or a subsection of this article. The green section corres\-ponds to Section~4 in which we give the exact asymptotic of $T_+(d)$. In others, we give an upper bound for the portion of the integral given $T_-(d)$ that corresponds. The grey point $(u,\theta)$ represents a point of coordinate $(u,\theta)$ on which we integrate. The angles $A(d)$ and $B$ are defined in Section~\ref{sec:asTpi4pi2}. We hope this figure helps to visualise the twelve regions on which we integrate.}  \label{fig:decoupage}
\end{figure}

Before expressing Proposition~\ref{prop:bound}, we introduce the function
\begin{equation}
  C(\theta) = \frac{1}{(\cos \theta) + 0.01}
\end{equation}
that is used all along this section.
\begin{proposition} \label{prop:bound}
  In the following table, for any $d \geq 2$, on the crossing of a line and a column, there is an upper bound of $F\left(\frac{\lune(u^{1/d},\theta)}{V_d}\right) - F(u)$ according to the values of $u$ and $\theta$:
  \begin{center}
    \begin{tabular}{|c||c||c|}
      \hline
       & $\theta \in [\pi/4,\pi/2]$ &  $\theta \in [\pi/2,\pi]$\\ \hline \hline
      $u \in [0,0.1^d]$ & $1/2$ & $1/2$\\ \hline
      $u \in [\max(0.1,2 \cos \theta)^d,1.1]$ & $\frac{18 \sqrt{2}}{\pi \sqrt{d}} \frac{u (\sin \theta)^{d+1}}{(1+u^{2/d}-2u^{1/d}\cos \theta)^{d/2}}$ & $\frac{8\sqrt{2}}{\pi \sqrt{d}} \frac{u(\sin \theta)^{d+1}}{(1+u^{2/d})^{d/2}}$ \\ \hline \hline
      $u \in [1.1,10^d]$ &  & $\frac{2662 \sqrt{2}}{\pi \sqrt{d}} \frac{1}{\left(1+1.1^{2/d}\right)^{d/2}} \frac{\ln u}{u^2}$\\ \hline
      $u \in [10^d,\infty) $ & & $F(u-1)-F(u)$\\ \hline \hline
      $u \in [1.1,C(\theta)^{d}]$ & $\frac{25168 \sqrt{2}}{\pi \sqrt{d}} \frac{\ln u}{u^2} \frac{(\sin \theta)^{d+1}}{\left(1+u^{2/d}-2 u^{1/d} \cos \theta\right)^{d/2}}$ & \\ \hline
      $u \in [C(\theta)^{d},\infty)$ & $F(u-1)-F(u)$ & \\ \hline
    \end{tabular}
  \end{center}
  Moreover, if $\theta \in [0,\pi/4]$ and $u \in [0,\infty)$, an upper bound of $F\left(\frac{\lune(u^{1/d},\theta)}{V_d}\right)-F(u)$ is $F(u-1)-F(u)$.
\end{proposition}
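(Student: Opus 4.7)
The quantity $F(\lune(u^{1/d},\theta)/V_d)-F(u)$ factors into two independent questions: how much the two-ball intersection volume $V_d u-\lune(u^{1/d},\theta)$ pulls $\lune/V_d$ below $u$, and how steep $F$ is on the resulting interval. My plan is to treat these two ingredients separately and then glue them region by region. For the trivial entries, note that the intersection of the unit ball with the $z$-ball has volume at most $V_d$, hence $\lune/V_d\geq u-1$. Because $F$ is decreasing with $F(0^+)=1/2$ and $F\geq 0$ (both read off the explicit formula for $F$), this gives at once $F(\lune/V_d)-F(u)\leq 1/2$ (filling the $u\in[0,0.1^d]$ entries) and $F(\lune/V_d)-F(u)\leq F(u-1)-F(u)$ (filling the whole $\theta\in[0,\pi/4]$ row as well as the right-hand entries with $u\geq C(\theta)^d$ and $u\geq 10^d$).

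\textbf{Geometric bound.} For the refined entries, set $\delta=\sqrt{1+z^2-2z\cos\theta}$, $p=(1-z\cos\theta)/\delta$ and $\phi_0=\arccos p$, so that $\sin\phi_0=z\sin\theta/\delta$. The intersection of the two balls lies inside the spherical cap $\{x\geq p\}$ of the unit ball, whose volume in angular coordinates is $V_{d-1}\int_0^{\phi_0}\sin^d\phi\,\di\phi$. Using $\cos\phi\geq\cos\phi_0$ on $[0,\phi_0]$,
\begin{equation*}
  \int_0^{\phi_0}\sin^d\phi\,\di\phi\leq\frac{1}{\cos\phi_0}\int_0^{\phi_0}\sin^d\phi\cos\phi\,\di\phi=\frac{\sin^{d+1}\phi_0}{(d+1)\cos\phi_0},
\end{equation*}
which rearranges to
\begin{equation*}
  V_d z^d-\lune(z,\theta)\leq \frac{V_{d-1}\,(z\sin\theta)^{d+1}}{(d+1)\,\delta^d\,(1-z\cos\theta)}.
\end{equation*}
Substituting $z=u^{1/d}$ and using Stirling for $V_{d-1}/(V_d(d+1))\sim 1/\sqrt{2\pi d}$ yields an upper bound of shape $u-\lune(u^{1/d},\theta)/V_d\leq C\,u(\sin\theta)^{d+1}/\bigl(\sqrt d\,\delta^d(1-u^{1/d}\cos\theta)\bigr)$ with an explicit $C$. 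The cutoff $C(\theta)=1/(\cos\theta+0.01)$ is tailored so that for $u\leq C(\theta)^d$ one has $1-u^{1/d}\cos\theta\geq 0.01\,u^{1/d}$, keeping that denominator under control; past the cutoff the bound degrades and we revert to $F(u-1)-F(u)$. For $\theta\in[\pi/2,\pi]$ the sign $\cos\theta\leq 0$ gives $1-u^{1/d}\cos\theta\geq 1$ for free, and $\delta^d\geq(1+u^{2/d})^{d/2}$ allows the $\theta$-dependence to be stripped out of the denominator.

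\textbf{Variation of $F$ and assembly.} Direct differentiation gives $F'(z)=(2+3z)/(z^2(1+z)^2)-2\ln(1+z)/z^3$, from which one reads off $|F'(z)|\leq c_1$ on $[0,2]$ and $|F'(z)|\leq c_2\ln z/z^3$ on $[1,\infty)$ with explicit constants. Combining the mean-value inequality $F(\lune/V_d)-F(u)\leq (u-\lune/V_d)\sup_{[\lune/V_d,u]}|F'|$ with the geometric bound produces, on $u\leq 1.1$, a bound of shape $u(\sin\theta)^{d+1}/(\sqrt d\,\delta^d)$; the exponent $u^{(d+1)/d}=u\cdot u^{1/d}$ collapses to $u$ because $1.1^{1/d}$ is absorbed into the constant, and $1-u^{1/d}\cos\theta$ is bounded below uniformly when either $\theta\geq\pi/2$ or $\theta\in[\pi/4,\pi/2]$ with $u\leq 1.1$. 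On $u\geq 1.1$ the product $u\cdot\ln u/u^3=\ln u/u^2$ furnishes the $\ln u/u^2$ factor appearing in the middle-right entries.

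\textbf{Main obstacle.} The deepest difficulty is not conceptual but a careful chase of constants: the proposition quotes the explicit prefactors $18\sqrt 2/\pi$, $2662\sqrt 2/\pi$ and $25168\sqrt 2/\pi$, each of which packages together the Stirling remainder for $V_{d-1}/V_d$, the supremum of $|F'|$ on the relevant subinterval, and the infimum of $1-u^{1/d}\cos\theta$ in the worst configuration of its region. A secondary subtlety is justifying that the cap inequality is tight enough precisely where it is invoked: when $u$ is close to the threshold $z(\theta)^d$ the two balls barely overlap and the cap argument is essentially sharp, whereas when $u$ is close to $C(\theta)^d$ or $10^d$ one must check that switching over to $F(u-1)-F(u)$ really is continuous (no region of $(u,\theta)$ is left uncovered by the tabulated bounds).
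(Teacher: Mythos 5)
The overall architecture of your proof mirrors the paper's: a lower bound on $\lune(u^{1/d},\theta)/V_d$ via a cap estimate, an upper bound on the variation of $F$, then a region-by-region assembly (the paper packages these as Lemmas~\ref{lem:llb} and~\ref{lem:Fub}). However, your geometric step contains a genuine error. You assert that ``the intersection of the two balls lies inside the spherical cap $\{x\geq p\}$ of the unit ball,'' with $p=(1-z\cos\theta)/\delta$. This is false: the hyperplane $x=p$ passes through the intersection circle of the two spheres, so the lens of intersection straddles it. On the side $x\geq p$ the intersection coincides with a cap of the \emph{unit} ball, but on the side $x\leq p$ it coincides with a cap of the ball of radius $z$, which is outside the set $\{x\geq p\}$. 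The intersection is therefore the \emph{union} of two caps, one cut from each ball, and both must be estimated. This is exactly why the paper's bound in Lemma~\ref{lem:llb} carries the \emph{sum}
\[
\frac{1}{u^{-1/d}-\cos\theta}+\frac{1}{u^{1/d}-\cos\theta},
\]
one term per cap, while your estimate produces only (the equivalent of) the first term. Consequently your claimed inequality
\[
V_d z^d-\lune(z,\theta)\leq \frac{V_{d-1}\,(z\sin\theta)^{d+1}}{(d+1)\,\delta^d\,(1-z\cos\theta)}
\]
is not a valid upper bound on the intersection volume. A correct proof also needs a lower bound on $z-\cos\theta$ in the relevant regions (not just on $1-z\cos\theta$), which you never address because you never see that denominator. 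Note that the hypothesis $u\geq z(\theta)^d$, i.e.\ $z\geq 2\cos\theta\geq\cos\theta$, is precisely what makes the second denominator positive; this is part of why the lemma carries the restriction to $u\geq (2\cos\theta)^d$ when $\theta<\pi/2$.

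Beyond this, your treatment is at the level of an outline: you acknowledge that the named constants $18\sqrt 2/\pi$, $2662\sqrt2/\pi$, $25168\sqrt2/\pi$ are what need to be produced, but you do not carry out the case distinctions that generate them. The paper does so by (a) combining the two-term cap bound with either $F(u-\epsilon)-F(u)\leq\tfrac{2}{3}\epsilon$ (near $u\leq1.1$) or $F(u-\epsilon)-F(u)\leq242\,\epsilon\ln u/u^3$ (for $u\geq1.1$, using convexity and $F(u-1)-F(u)$), and (b) checking the corner case where the cap bound would exceed $u$ itself, in which case one falls back to $F(0)-F(u)\leq\tfrac23 u$. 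That corner case is needed to make the tabulated bounds hold unconditionally, and it is absent from your argument. In short: the analytic half of your plan is right, but the geometric lemma is wrong as stated, and the bookkeeping that produces the explicit table is not actually done.
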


\subsection{Proof of Proposition~\ref{prop:bound}}  \label{sec:bound}
The proof of Proposition~\ref{prop:bound} is based on the two following lemmas: one that gives lower bounds for $\displaystyle \frac{\lune(u^{1/d},\theta)}{V_d}$ and the other one that gives upper bounds for $F(u-\epsilon)-F(u)$ when $\epsilon \in [0,u]$.
\begin{lemma} \label{lem:llb}
  \begin{enumerate}
  \item For any $d \geq 2$, for any $u \geq 0$ and $\theta \in [0,\pi]$,
    \begin{displaymath}
      \frac{\lune(u^{1/d},\theta)}{V_d} \geq \max(0,u-1).
    \end{displaymath}
    
  \item Moreover, if $\theta \in [\pi/2,\pi]$ or, if $\theta \in [0,\pi/2]$ and $u \in [(2 \cos \theta)^{d}, C(\theta)^d]$,
    \begin{displaymath}
      \frac{\lune(u^{1/d},\theta)}{V_d} \geq u - \frac{\sqrt{2}}{\pi \sqrt{d}} \frac{u (\sin \theta)^{d+1}}{\left(1+u^{2/d}-2u^{1/d}\cos\theta\right)^{d/2}} \left(\frac{1}{u^{-1/d} - \cos \theta} + \frac{1}{u^{1/d}-\cos \theta} \right).
    \end{displaymath}
  \end{enumerate}
\end{lemma}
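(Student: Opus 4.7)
The plan is to treat the two parts separately. Part~(1) is essentially a volume-monotonicity observation: since $V_d z^d - \lune(z,\theta)$ is the volume of the intersection of the unit ball with a ball of radius $z$, this is at most $V_d$, so $\lune(z,\theta)\geq V_d(z^d-1)$; combined with $\lune\geq 0$ (being a volume) and the substitution $z=u^{1/d}$, this gives the claim.

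For Part~(2), I would work from the explicit formula for $V_dz^d-\lune(z,\theta)$ derived in Section~\ref{sec:dim}. Writing $\delta=\sqrt{1+z^2-2z\cos\theta}$, $a=(1-z\cos\theta)/\delta$ and $b=(\cos\theta-z)/\delta$, it reads
\begin{displaymath}
  V_dz^d-\lune(z,\theta) = V_{d-1}\int_a^1(1-x^2)^{(d-1)/2}\,\di x + V_{d-1}z^d\int_{-1}^b(1-x^2)^{(d-1)/2}\,\di x.
\end{displaymath}
A direct expansion of $\delta^2$ gives the clean identities $1-a^2 = z^2\sin^2\theta/\delta^2$ and $1-b^2 = \sin^2\theta/\delta^2$. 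The hypotheses of Part~(2) are precisely what is needed to guarantee $a>0$ and $-b>0$: for $\theta\in[\pi/2,\pi]$ both are immediate from $\cos\theta\leq 0$, while for $\theta\in[0,\pi/2]$ the range $u\in[(2\cos\theta)^d, C(\theta)^d]$ translates to $z\in[2\cos\theta,\,1/(\cos\theta+0.01)]$, which forces $z-\cos\theta\geq \cos\theta>0$ and $1-z\cos\theta>0$.

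I then invoke the elementary one-step integration-by-parts estimate
\begin{displaymath}
  \int_c^1(1-x^2)^{(d-1)/2}\,\di x \leq \frac{(1-c^2)^{(d+1)/2}}{(d+1)c},\qquad 0<c\leq 1,
\end{displaymath}
obtained by writing the integrand as $-\frac{1}{(d+1)x}\frac{\di}{\di x}\bigl[(1-x^2)^{(d+1)/2}\bigr]$, integrating by parts, and dropping the non-positive remainder $-\int_c^1 (1-x^2)^{(d+1)/2}/((d+1)x^2)\,\di x$. Applying this to the first integral with $c=a$, and to the symmetric form $\int_{-1}^b=\int_{-b}^1$ with $c=-b$, then substituting the identities for $1-a^2$, $1-b^2$ and finally $z=u^{1/d}$ (so that $z/(1-z\cos\theta)=1/(u^{-1/d}-\cos\theta)$), produces the upper bound
\begin{displaymath}
  u-\frac{\lune(u^{1/d},\theta)}{V_d} \leq \frac{V_{d-1}}{(d+1)V_d}\cdot\frac{u(\sin\theta)^{d+1}}{(1+u^{2/d}-2u^{1/d}\cos\theta)^{d/2}}\left(\frac{1}{u^{-1/d}-\cos\theta}+\frac{1}{u^{1/d}-\cos\theta}\right).
\end{displaymath}

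It then remains to verify the prefactor inequality $V_{d-1}/((d+1)V_d)\leq \sqrt{2}/(\pi\sqrt{d})$ for every $d\geq 2$. Using $V_k=\pi^{k/2}/\Gamma(k/2+1)$ this is a gamma-function inequality whose two sides both decay like $1/\sqrt{d}$, with the right-hand side larger by a factor of order $2/\sqrt{\pi}$; it can be made uniform via Gautschi's inequality $\Gamma(d/2+1)/\Gamma((d+1)/2)\leq d/\sqrt{2(d-1)}$ together with a direct check at small $d$. I expect this constant-tracking to be the only genuine obstacle: the geometric identifications of $1-a^2$ and $1-b^2$ and the integration-by-parts estimate are both standard, but turning the asymptotic behaviour into a clean uniform bound valid for all $d\geq 2$ is where care is needed.
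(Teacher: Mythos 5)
Your proposal is correct and follows the same overall strategy as the paper: Part~(1) via the volume-monotonicity observation, and Part~(2) via the explicit integral formula, with identities $1-a^2 = z^2\sin^2\theta/\delta^2$, $1-b^2 = \sin^2\theta/\delta^2$, a one-step bound on each incomplete-beta-type integral, and a uniform bound on the prefactor $V_{d-1}/((d+1)V_d)$. Your integration-by-parts estimate is exactly equivalent to the paper's substitution $v=\sqrt{1-x^2}$ followed by bounding $1/\sqrt{1-v^2}$ by its endpoint value: both yield $\int_c^1(1-x^2)^{(d-1)/2}\,\di x \le (1-c^2)^{(d+1)/2}/((d+1)c)$. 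The one place the paper is slicker is the prefactor: instead of Gautschi plus a small-$d$ check, it observes that $V_{d-1}/(V_d\sqrt{d})$ is decreasing in $d$, giving $V_{d-1}/V_d \le \sqrt{2d}/\pi$ directly from the $d=2$ case, and then uses $1/(d+1)\le 1/d$; your route reaches the same uniform bound but is heavier on casework.
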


\begin{lemma} \label{lem:Fub}
  Let the function $\displaystyle F(x) = \frac{1}{x} \left( \frac{\ln(1+x)}{x} - \frac{1}{x+1} \right)$.
  \begin{enumerate}
  \item For any $u \geq 0$, for any $\epsilon \in [0,u]$, $\displaystyle F(u-\epsilon) - F(u) \leq \frac{1}{2}$.    
  \item For any $u \geq 0$, for any $\epsilon \in [0,u]$, $\displaystyle F(u-\epsilon) - F(u) \leq \frac{2}{3} \epsilon$.
  \item For any $u \geq 1.1$, for any $\epsilon \in [0,1]$,
    $F(u-\epsilon) - F(u) \leq \frac{2\ln u}{u(u-1)^2} \epsilon \leq 242 \frac{\ln u}{u^3} \epsilon$.
  \end{enumerate}
\end{lemma}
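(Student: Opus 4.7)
The approach rests on two integral representations of $F$, from which positivity, monotonicity and sharp limiting values fall out immediately; the three bounds will then follow by integrating a pointwise estimate on $-F'$ over $[u-\epsilon,u]$. Starting from $\ln(1+x) - \frac{x}{1+x} = \int_0^x \frac{t\,\di t}{(1+t)^2}$ and $\frac{\ln(1+x)}{x} = \int_0^1 \frac{\di s}{1+xs}$, I would first check the identities
\[
  F(x) \;=\; \frac{1}{x^2}\int_0^x \frac{t\,\di t}{(1+t)^2} \;=\; \frac{1}{1+x}\int_0^1 \frac{1-s}{1+xs}\,\di s.
\]
The second form makes $F>0$ on $(0,\infty)$ and $F(0^+)=\int_0^1(1-s)\,\di s = 1/2$ obvious, and differentiating it yields
\[
  -F'(x) \;=\; \frac{1}{(1+x)^2}\int_0^1 \frac{1-s}{1+xs}\,\di s \;+\; \frac{1}{1+x}\int_0^1 \frac{s(1-s)}{(1+xs)^2}\,\di s.
\]
Both summands are products of positive factors that are decreasing in $x$ (the prefactors, and the integrands pointwise in $s$), so $-F'$ is positive and decreasing on $(0,\infty)$, with supremum $(-F')(0^+) = \tfrac{1}{2}+\tfrac{1}{6} = \tfrac{2}{3}$.

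Parts~1 and~2 are then immediate. Since $F$ is positive with $\sup F = 1/2$, we get $F(u-\epsilon)-F(u) \leq F(u-\epsilon) \leq 1/2$; and since $-F' \leq 2/3$, we get $F(u-\epsilon)-F(u) = \int_{u-\epsilon}^u (-F'(t))\,\di t \leq \tfrac{2}{3}\epsilon$. For part~3 the extra ingredient is the size estimate $F(x) \leq \frac{\ln(1+x)}{x(1+x)}$, obtained by bounding $1-s \leq 1$ in the second integral representation, which gives in particular $F(u-1) \leq \frac{\ln u}{(u-1)u}$. Differentiating the first integral representation produces the identity $F'(x) = -\frac{2F(x)}{x} + \frac{1}{x(1+x)^2}$, hence $-F'(t) \leq \frac{2F(t)}{t}$; the function $t \mapsto F(t)/t$ is decreasing on $(0,\infty)$ (both factors are), so on $[u-1,u]$ it is bounded by $F(u-1)/(u-1) \leq \frac{\ln u}{u(u-1)^2}$. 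Integrating the bound $-F'(t)\leq \frac{2\ln u}{u(u-1)^2}$ over $[u-\epsilon,u]$ yields the first inequality of part~3. The second inequality $\frac{2\ln u}{u(u-1)^2} \leq \frac{242\ln u}{u^3}$ is purely elementary: it rearranges to $u^2 \leq 121(u-1)^2$, i.e.\ to $u \geq 11/10$, which is exactly the hypothesis $u \geq 1.1$.

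The only obstacle is conceptual rather than computational: the key trick is to write $F$ in the form $\frac{1}{1+x}\int_0^1 \frac{1-s}{1+xs}\,\di s$ (and, for part~3, to weaken it to $\frac{1}{1+x}\int_0^1 \frac{\di s}{1+xs}$), since this converts the awkward mixture of $\ln$ and rational functions into an expression that is manifestly a product of quantities monotone in $x$. Once this representation is in hand, the three bounds reduce to one-step estimates and a check of the limits $F(0^+)=1/2$ and $(-F')(0^+)=2/3$.
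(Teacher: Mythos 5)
Your proof is correct, and it is also more self-contained than the paper's. Where the paper simply asserts that $F$ is nonnegative, decreasing and convex with $F(0^+)=1/2$ and $|F'(0^+)|=2/3$, you derive all of this from the two integral representations $F(x)=\tfrac{1}{x^2}\int_0^x \tfrac{t\,\di t}{(1+t)^2}=\tfrac{1}{1+x}\int_0^1 \tfrac{1-s}{1+xs}\,\di s$, which make positivity, monotonicity of $F$ and of $-F'$, and the limiting values at $0^+$ completely transparent; this is a genuine improvement in rigour, since the paper gives no justification for these facts. For parts~1 and~2 the underlying logic is then the same as the paper's. For part~3 you take a different route: the paper uses the secant-slope comparison $F(u-\epsilon)-F(u)\leq\bigl(F(u-1)-F(u)\bigr)\epsilon$ (``Thales'' via convexity) and then bounds $F(u-1)-F(u)$ by direct algebraic manipulation of the explicit expression, whereas you bound the derivative pointwise via the ODE identity $-F'(x)=\tfrac{2F(x)}{x}-\tfrac{1}{x(1+x)^2}\leq\tfrac{2F(x)}{x}$, the monotonicity of $t\mapsto F(t)/t$, and the crude bound $F(u-1)\leq\tfrac{\ln u}{u(u-1)}$ coming from $1-s\leq 1$, then integrate over $[u-\epsilon,u]$. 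Both arrive at the same intermediate constant $\tfrac{2\ln u}{u(u-1)^2}$; your route avoids the somewhat delicate algebraic estimate $u^2\ln u-(u-1)^2\ln(1+u)\leq 2u\ln u$ that the paper needs, at the price of the extra (but clean) ODE identity. The final step ($u\geq 1.1 \iff u/(u-1)\leq 11$) is identical in both.
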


\subsubsection{Proof of Lemma~\ref{lem:llb}}
\begin{itemize}
\item First, remark that $\lune(z,\theta) \geq  z^d V_d-V_d$ for any $z \in [0,\infty)$ and any $\theta \in [0,\pi]$. Indeed, it is the volume of a ball of radius $z$ minus some elements of a ball of radius $1$, hence in the worst case, the ball of radius $1$ is entirely contained in the ball of radius $z$. Moreover, it is always non negative.
  
\item Unfortunately, this bound is not enough in general and we need to refine it when $\theta \in [\pi/2,\pi]$, or when $\theta \in [0,\pi/2]$ and $2 \cos \theta \leq z \leq C(\theta)$. For this, we recall Equation~\eqref{eq:lune}
  \begin{displaymath}
    \frac{\lune(z,\theta)}{V_d} = z^d - \frac{V_{d-1}}{V_d} \left( \int_{\frac{1-z\cos \theta}{\sqrt{(1-z\cos \theta)^2 + (z \sin \theta)^2}}}^{1} (1-x^2)^{\frac{d-1}{2}}\, \di x + z^d \int_{-1}^{\frac{\cos \theta-z}{\sqrt{(\cos \theta -z)^2 + (\sin \theta)^2}}} (1-x^2)^{\frac{d-1}{2}}\, \di x\right).
  \end{displaymath}
  
  As $\displaystyle \frac{V_{d-1}}{V_d\sqrt{d}}$ decreases in $d$, we upper bound $\displaystyle \frac{V_{d-1}}{V_d}$ by $\displaystyle \frac{V_1}{V_2 \sqrt{2}} \sqrt{d} = \frac{\sqrt{2d}}{\pi}$. Now let us upper bound both terms in the sum inside the parenthesis by doing the change of variables $v = \sqrt{1-x^2}$.
  
  Hence the left term rewrites, remembering that $1-z\cos \theta \geq 0$ because $z \leq C(\theta) < 1/\cos \theta$,
  \begin{align*}
    \int_{\frac{1-z\cos \theta}{\sqrt{(1-z\cos \theta)^2 + (z \sin \theta)^2}}}^{1} (1-x^2)^{\frac{d-1}{2}}\, \di x  & = \int_{0}^{\frac{z \sin \theta}{\sqrt{1+z^2-2z\cos \theta}}} \frac{v^{d}}{\sqrt{1-v^2}}\, \di v \\
                                                                                                                     & \leq \frac{\sqrt{1+z^2-2z\cos \theta}}{1-z \cos \theta} \int_{0}^{\frac{z \sin \theta}{\sqrt{1+z^2-2z\cos \theta}}} v^{d} \, \di v \\
                                                                                                                     & = \frac{\sqrt{1+z^2-2z\cos \theta}}{1-z \cos \theta} \frac{1}{d+1}  \left(\frac{z \sin \theta}{\sqrt{1+z^2-2z\cos \theta}} \right)^{d+1}\\
                                                                                                                     & = \frac{z^d}{d+1} \frac{\sqrt{1+z^2-2z\cos \theta}}{1/z-\cos \theta}   \left(\frac{\sin \theta}{\sqrt{1+z^2-2z\cos \theta}} \right)^{d+1}.
  \end{align*}
  Similarly, remarking that $\cos \theta - z \leq 0$ because $z \geq z(\theta) \geq \cos \theta$, the right term becomes 
  \begin{displaymath}
    \int_{-1}^{\frac{\cos \theta-z}{\sqrt{(z-\cos \theta)^2 + (\sin \theta)^2}}} (1-x^2)^{\frac{d-1}{2}}\, \di x \leq \frac{1}{d+1} \frac{\sqrt{(z-\cos \theta)^2 + (\sin \theta)^2}}{z-\cos \theta} \left(\frac{\sin \theta}{\sqrt{(z-\cos \theta)^2 + (\sin \theta)^2}} \right)^{d+1}. 
  \end{displaymath}

  We conclude the proof because $1/(d+1) \leq 1/d$. \qed
\end{itemize}

\subsubsection{Proof of Lemma~\ref{lem:Fub}}
\begin{enumerate}
  \item The point 1 holds because $F(x) \in [0,1/2]$ for any $x \in [0,\infty[$.
  
  \item The point 2 holds because $F$ is concave upward and decreases, then, for any $u \geq 0$ and $\epsilon \leq u$,
  \begin{displaymath}
    F(u-\epsilon) - F(u) \leq F(0) - F(\epsilon) \leq |F'(0)| \epsilon = \frac{2}{3} \epsilon.
  \end{displaymath}

  \item Finally, if $u \geq 1.1$ and $\epsilon \leq 1$, then
  \begin{displaymath}
    F(u-\epsilon) - F(u) \leq (F(u-1)-F(u)) \epsilon
  \end{displaymath}
  because $F$ is concave upward and decreases (Thales' theorem). But,
  \begin{align*}
    F(u-1) - F(u) & = \frac{1}{u-1} \left( \frac{\ln(u)}{u-1} - \frac{1}{u} \right) - \frac{1}{u} \left( \frac{\ln(1+u)}{u} - \frac{1}{u+1} \right)\\
                  & = \frac{u^2\ln u - (u-1)^2 \ln (1+u)}{u^2 (u-1)^2} - \left(\frac{1}{u(u-1)} - \frac{1}{u(u+1)} \right)\\
                  & \leq \frac{2 u \ln u}{u^2 (u-1)^2} - \underbrace{\frac{2}{(u-1)u(u+1)}}_{\geq 0} \leq \frac{2 \ln u}{u (u-1)^2} \leq 2 \cdot 11^2\ \frac{\ln u}{u^3}.
  \end{align*}
\end{enumerate}
Remark, we obtain that $u^2\ln u - (u-1)^2 \ln (1+u) \leq 2 u \ln u$ because both $u(u-2) \leq (u-1)^2$ and $0 \leq \ln u \leq \ln (1+u)$. \qed

\subsubsection{Proposition~\ref{prop:bound} from Lemmas~\ref{lem:llb} and~\ref{lem:Fub}}
\begin{proof}[Proof of Proposition~\ref{prop:bound}]
The five cases corresponding to the bounds $1/2$ and $F(u-1)-F(u)$ are obvious respectively by points~$1$ of Lemma~\ref{lem:Fub} and~\ref{lem:llb}.

Now, let's do the four last cases.
\begin{itemize}

\item When $u \in [\max(0.1,2 \cos \theta)^d,1.1]$ and $\theta \in [\pi/4,\pi/2]$: by both points~$2$ of Lemmas~\ref{lem:llb} and~\ref{lem:Fub},
  \begin{displaymath}
    F\left( \frac{\lune(u^{1/d},\theta)}{V_d} \right) - F(u) \leq \frac{2}{3} \frac{\sqrt{2}}{\pi \sqrt{d}} (\sin \theta)^{d+1} \frac{u}{(1+u^{2/d}-2u^{1/d}\cos \theta)^{d/2}} \left(\frac{1}{u^{-1/d}-\cos \theta} + \frac{1}{u^{1/d}-\cos \theta} \right).
  \end{displaymath}
  \begin{itemize}
  \item If $2 \cos \theta \geq 0.1$ and $d \geq 1$, then
    \begin{displaymath}
      \frac{1}{u^{-1/d}-\cos \theta} + \frac{1}{u^{1/d}-\cos \theta}
      \leq \frac{1}{1.1^{-1/d}-\cos \theta} + \frac{1}{2 \cos \theta-\cos \theta}
      \leq \frac{1}{\frac{1}{1.1} - \frac{\sqrt{2}}{2}} + \frac{1}{\cos \theta}
      \leq 5 + 20.
    \end{displaymath}
  \item If $2 \cos \theta < 0.1$ and $d \geq 1$, then
    \begin{displaymath}
      \frac{1}{u^{-1/d}-\cos \theta} + \frac{1}{u^{1/d}-\cos \theta} 
      \leq \frac{1}{1.1^{-1/d}-\cos \theta} + \frac{1}{0.1-\cos \theta} 
      \leq \frac{1}{\frac{1}{1.1} - 0.05} + \frac{1}{0.05} 
      \leq 1.2 + 20.
    \end{displaymath}
  \end{itemize}
  Let us remark that, in the case
  \begin{displaymath}
    \frac{\sqrt{2}}{\pi \sqrt{d}} \frac{u (\sin \theta)^{d+1}}{\left(1+u^{2/d}-2u^{1/d}\cos\theta\right)^{d/2}} \left(\frac{1}{u^{-1/d} - \cos \theta} + \frac{1}{u^{1/d}-\cos \theta} \right) > u,
  \end{displaymath}
  the bound still holds because
  \begin{align}
    & F\left( \frac{\lune(u^{1/d},\theta)}{V_d} \right) - F(u) \leq F\left( 0 \right) - F(u) \leq \frac{2}{3} u \nonumber \\
    & \quad \leq \frac{2}{3} \frac{\sqrt{2}}{\pi \sqrt{d}} \frac{u (\sin \theta)^{d+1}}{\left(1+u^{2/d}-2u^{1/d}\cos\theta\right)^{d/2}} \left(\frac{1}{u^{-1/d} - \cos \theta} + \frac{1}{u^{1/d}-\cos \theta} \right) \label{eq:pb}
  \end{align}
  and we can end in the same way as before.
  
\item If $u \in [0.1^d,1.1]$, $\theta \in [\pi/2,\pi]$ (in particular, $\cos \theta \leq 0$) and $d \geq 1$: by points~$2$ of Lemmas~\ref{lem:llb} and~\ref{lem:Fub},
\begin{align*}
  F\left( \frac{\lune(u^{1/d},\theta)}{V_d} \right) - F(u) & \leq \frac{2}{3} \frac{\sqrt{2}}{\pi \sqrt{d}} (\sin \theta)^{d+1} \frac{u}{(1+u^{2/d}-2u^{1/d}\cos \theta)^{d/2}} \left(\frac{1}{u^{-1/d}-\cos \theta} + \frac{1}{u^{1/d}-\cos \theta} \right)\\
                                                           & \leq \frac{2}{3} \frac{\sqrt{2}}{\pi \sqrt{d}} (\sin \theta)^{d+1} \frac{u}{(1+u^{1/d})^{d/2}} \left(\frac{1}{1.1^{-1}} + \frac{1}{0.1} \right).
\end{align*}
The case
\begin{displaymath}
  \frac{\sqrt{2}}{\pi \sqrt{d}} \frac{u (\sin \theta)^{d+1}}{\left(1+u^{2/d}-2u^{1/d}\cos\theta\right)^{d/2}} \left(\frac{1}{u^{-1/d} - \cos \theta} + \frac{1}{u^{1/d}-\cos \theta} \right) > u
\end{displaymath}
is done via Equation~\eqref{eq:pb}.

\item If $u \in [1.1,10^d]$, $\theta \in [\pi/2,\pi]$ and $d \geq 1$:
  \begin{itemize}
  \item if $\displaystyle \frac{\sqrt{2}}{\pi \sqrt{d}} (\sin \theta)^{d+1} \frac{u}{(1+u^{2/d}-2u^{1/d}\cos \theta)^{d/2}} \left(\frac{1}{u^{-1/d}-\cos \theta} + \frac{1}{u^{1/d}-\cos \theta} \right) \leq 1$,  by point~$2$ of Lemma~\ref{lem:llb} and point~$3$ of Lemma~\ref{lem:Fub},
    \begin{align*}
      F\left( \frac{\lune(u^{1/d},\theta)}{V_d} \right) - F(u) & \leq 242 \frac{\ln u}{u^3} \frac{\sqrt{2}}{\pi \sqrt{d}} \frac{u (\sin \theta)^{d+1}}{(1+u^{2/d}-2u^{1/d}\cos \theta)^{d/2}} \left(\frac{1}{u^{-1/d}-\cos \theta} + \frac{1}{u^{1/d}-\cos \theta} \right) \\
                                                               & \leq \frac{242\sqrt{2}}{\pi \sqrt{d}} \frac{\ln u}{u^2}\frac{1}{\left(1+1.1^{2/d}\right)^{d/2}} \left( \frac{1}{1/10} + 1 \right)\\
                                                               & \leq \frac{2662\sqrt{2}}{\pi \sqrt{d}} \frac{\ln u}{u^2}\frac{1}{\left(1+1.1^{2/d}\right)^{d/2}}.
    \end{align*}    
  \item else, $1 < \frac{\sqrt{2}}{\pi \sqrt{d}} (\sin \theta)^{d+1} \frac{u}{(1+u^{2/d}-2u^{1/d}\cos \theta)^{d/2}} \left(\frac{1}{u^{-1/d}-\cos \theta} + \frac{1}{u^{1/d}-\cos \theta} \right)$, by point~$1$ of Lemma~\ref{lem:llb} and point~$3$ of Lemma~\ref{lem:Fub},
    \begin{align*}
      & F\left( \frac{\lune(u^{1/d},\theta)}{V_d} \right) - F(u) \leq F(u-1) - F(u) \leq 242 \frac{\ln u}{u^3} \\
      & \quad \leq 242 \frac{\ln u}{u^3} \frac{\sqrt{2}}{\pi \sqrt{d}} \frac{u (\sin \theta)^{d+1}}{(1+u^{2/d}-2u^{1/d}\cos \theta)^{d/2}} \left(\frac{1}{u^{-1/d}-\cos \theta} + \frac{1}{u^{1/d}-\cos \theta} \right)
    \end{align*}
    and we conclude in the same way.
  \end{itemize}
  
\item If $u \in [1.1,C(\theta)^d]$, $\theta \in [\pi/4,\pi/2]$ and $d \geq 1$: by the same case distinction as above, we can always obtain
  \begin{displaymath}
    F\left( \frac{\lune(u^{1/d},\theta)}{V_d} \right) - F(u) \leq 242 \frac{\ln u}{u^3} \frac{\sqrt{2}}{\pi \sqrt{d}} \frac{u (\sin \theta)^{d+1}}{(1+u^{2/d}-2u^{1/d}\cos \theta)^{d/2}} \left(\frac{1}{u^{-1/d}-\cos \theta} + \frac{1}{u^{1/d}-\cos \theta} \right).
  \end{displaymath}
  But, now,
  \begin{align*}
    \frac{1}{u^{-1/d}-\cos \theta} + \frac{1}{u^{1/d}-\cos \theta} & \leq \frac{1}{C(\theta)^{-1}-\cos \theta} + \frac{1}{1 - \frac{\sqrt{2}}{2} }\\
                                                                   & \leq   \frac{1}{0.01} + (2+\sqrt{2}) \leq 104. \qedhere
  \end{align*}
\end{itemize}
\end{proof}
\bigskip
Now, the rest of the section consists to integrate these nine upper bounds on the twelve domains drawn in Figure~\ref{fig:decoupage} to prove Equation~\eqref{eq:asT-} in Theorem~\ref{thm:as}.

\subsection{Asymptotic when $\theta \in [0,\pi/4]$} \label{sec:asT0pi4}
Let $\theta \in [0,\pi/4]$, in this case, $2 \cos \theta \geq 1/\cos \theta$. Hence, we need to upper bound the following integral
\begin{align*}
  & \quad \int_{0}^{\pi/4} \di \theta \sin(\theta)^{d-2} \int_{(2 \cos(\theta))^d}^\infty \di u \, F\left(\frac{\lune(u^{1/d},\theta)}{V_d}\right)-F(u)\\
  & \leq \int_{0}^{\pi/4} \di \theta \sin(\theta)^{d-2} \int_{(2 \cos(\theta))^d}^\infty \di u \, F\left(u-1\right)-F(u) \text{ (by the last line of Proposition~\ref{prop:bound})}\\
  & = \int_{0}^{\pi/4} \di \theta \sin(\theta)^{d-2} \int_{(2 \cos(\theta))^{d}-1}^{(2 \cos(\theta))^{d}} F(u) \, \di u \text{ (because $\int_0^\infty F =1 < \infty$)}\\
  & = \int_{0}^{\pi/4} \di \theta \sin(\theta)^{d-2} \left[-\frac{\ln(1+u)}{u} \right]_{(2 \cos(\theta))^{d}-1}^{(2 \cos(\theta))^{d}}\\
  & \leq \int_{0}^{\pi/4} \di \theta \sin(\theta)^{d-2} \frac{d \ln(2 \cos \theta)}{(2 \cos \theta)^{d}-1} \text{ (the negative term given by $u=(2\cos \theta)^d$ is forgotten)}\\
  & \leq \frac{d \ln 2}{\sqrt{2}-1} \int_{0}^{\pi/4} \di \theta \sin(\theta)^{d-2} \text{ (because $\theta \in [0,\pi/4]$)}\\
  & \leq \frac{d \ln 2}{2(\sqrt{2}-1)} \frac{2\pi}{4} \left(\frac{\sqrt{2}}{2}\right)^{d} \text{ (remember that $d \geq 2$)}.
\end{align*}

\subsection{Asymptotic when $\theta \in [\pi/2,\pi]$} \label{sec:asTpi2pi}
The goal is to bound the following integral
\begin{displaymath}
  \int_{\pi/2}^{\pi} \di \theta \sin(\theta)^{d-2} \int_{0}^\infty \di u \, F\left(\frac{\lune(u^{1/d},\theta)}{V_d}\right)-F(u).
\end{displaymath}
For that, we split it into four parts and use on each of them the appropriate upper bound obtained in Proposition~\ref{prop:bound}, see Figure~\ref{fig:decoupage}.

\paragraph{When $u \leq 0.1^d$:}
\begin{displaymath}
  \int_{\pi/2}^{\pi} \di \theta \underbrace{\sin(\theta)^{d-2}}_{\leq 1} \int_{0}^{0.1^d} \di u \, \underbrace{F\left(\frac{\lune(u^{1/d},\theta)}{V_d}\right)-F(u)}_{\leq 1/2}  \leq \frac{\pi}{2}\frac{0.1^d}{2}.
\end{displaymath}

\paragraph{When $0.1^d \leq u \leq 1.1$:}
\begin{align*}
  & \quad \int_{\pi/2}^{\pi} \di \theta \sin(\theta)^{d-2} \int_{0.1^d}^{1.1} \di u \, F\left(\frac{\lune(u^{1/d},\theta)}{V_d} \right) -F(u) \\
  & \leq \frac{8 \sqrt{2}}{\pi \sqrt{d}} \int_{\pi/2}^{\pi} \di \theta \underbrace{\sin(\theta)^{2d-1}}_{\leq 1} \int_{0.1^d}^{1.1} \frac{u}{(1+u^{2/d})^{d/2}}\\
  & \leq \frac{4 \sqrt{2}}{\sqrt{d}} \int_{0.1^d}^{1.1} \left(\frac{u^{2/d}}{1+u^{2/d}}\right)^{d/2}
  \leq \frac{4.4 \sqrt{2}}{\sqrt{d}} \left(\frac{1.1^{2/d}}{1+1.1^{2/d}} \right)^{d/2} \leq \frac{4.4 \sqrt{2}}{\sqrt{d}} \left(\sqrt{\frac{11}{21}}\right)^{d}
\end{align*}
because $x \mapsto x/(1+x)$ increases on $[0,\infty)$ and $d \mapsto 1.1^{2/d}$ decreases on $[2,\infty)$.

\paragraph{When $1.1 \leq u \leq 10^d$:}
\begin{align*}
  & \quad \int_{\pi/2}^{\pi} \di \theta \underbrace{\sin(\theta)^{d-2}}_{\leq 1} \int_{1.1}^{10^d} \di u \, F\left( \frac{\lune(u^{1/d},\theta)}{V_d} \right) -F(u)\\
  & \leq \frac{\pi}{2}\frac{2662 \sqrt{2}}{\pi \sqrt{d}} \frac{1}{(1+1.1^{2/d})^{d/2}} \int_{1.1}^{10^d} \di u \,  \frac{\ln(u)}{u^2}\\
  & \leq \frac{1331 \sqrt{2}}{\sqrt{d}} \left(\sqrt{ \frac{10}{21}} \right)^d \underbrace{\int_{1}^{\infty} \di u \,  \frac{\ln(u)}{u^2}}_{=1}  \\
  & = \frac{1331 \sqrt{2}}{\sqrt{d}} \left( \sqrt{\frac{10}{21}} \right)^d.
\end{align*}

\paragraph{When $10^d \leq u$:}
\begin{align*}
  & \quad \int_{\pi/2}^{\pi} \di \theta \underbrace{\sin(\theta)^{d-2}}_{\leq 1} \int_{10^d}^{\infty} \di u \, F\left( \frac{\lune(u^{1/d},\theta)}{V_d} \right) -F(u)\\
  & \leq \frac{\pi}{2} \int_{10^d}^{\infty} \di u  F( u -1) -F(u)\\
  & = \frac{\pi}{2} \left[-\frac{\ln(x+1)}{x} \right]_{10^d-1}^{10^d}\\
  & \leq \frac{\pi}{2} \frac{d \ln(10)}{10^d-1} \leq \frac{d \pi \ln(10)}{2}  \left(\frac{1}{9} \right)^d.
\end{align*}

\subsection{Asymptotic when $\theta \in [\pi/4,\pi/2]$} \label{sec:asTpi4pi2}
To finish this article, we need to upper bound 
\begin{displaymath}
  \int_{\pi/4}^{\pi/2} \di \theta \sin(\theta)^{d-2} \int_{(2 \cos \theta)^d}^{\infty} \di u \, F\left( \frac{\lune(u^{1/d},\theta)}{V_d} \right) -F(u).
\end{displaymath}

This last integral is split into seven parts according to both $u$ and $\theta$, see Figure~\ref{fig:decoupage} to see these seven parts. We introduce the following two notations: for any $d \geq 2$,
\begin{equation} 
  A(d) = \arccos\left(\frac{1.1^{1/d}}{2} \right) \text{ and } B = \arccos(0.0 5).
\end{equation}
Just remark that $\left(2 \cos(A(d)) \right)^d = 1.1$ and $(2 \cos(B))^d = 0.1^d$, and so $\displaystyle \frac{\pi}{4} \leq A(d) \leq \frac{\pi}{3} \leq B \leq \frac{\pi}{2}$.

\subsubsection{When $\theta \in [\pi/4,\pi/2]$ and $u \geq C(\theta)^d$}
Firstly, remark that in that case then $1 < \frac{1}{0.01+\sqrt{2}/2} \leq C(\theta) \leq 100$.
\begin{align*} 
  & \quad \int_{\pi/4}^{\pi/2} \di \theta \underbrace{\sin(\theta)^{d-2}}_{\leq 1} \int_{C(\theta)^d}^\infty \di u \,F\left(\frac{\lune(u^{1/d},\theta)}{V_d}\right)-F(u) \\
  & \leq \int_{\pi/4}^{\pi/2} \di \theta \ \frac{d\ln(C(\theta))}{C(\theta)^{d}-1} \text{ (see case $u\geq 10^d$ in Section~\ref{sec:asTpi2pi})} \\
  & \leq d \int_{\pi/4}^{\pi/2} \di \theta\ \frac{\ln 100}{1-C(\theta)^{-d}} C(\theta)^{-d} \\
  & \leq \frac{d \ln 100}{1-(0.01+\sqrt{2}/2)^d} \int_{\pi/4}^{\pi/2} \di \theta\ ((\cos \theta)+0.01)^d\\
  & \leq \frac{d \ln 100}{1-(0.01+\sqrt{2}/2)^2} \frac{\pi}{4} \left( \frac{\sqrt{2}}{2}+0.01 \right)^{d}.
\end{align*}

\subsubsection{When $\theta \in [\pi/4,A(d)]$ and $u \in \left[(2 \cos \theta)^d,C(\theta)^{d}\right]$} \label{sec:pi4Ad}
We remark in that case that $u \geq (2 \cos A(d))^d \geq 1.1$. Hence, by Proposition~\ref{prop:bound},
\begin{align*}
  & \int_{\pi/4}^{A(d)} \di \theta \sin(\theta)^{d-2} \int_{(2 \cos \theta)^d}^{C(\theta)^d} \di u \, F\left(\frac{\lune(u^{1/d},\theta)}{V_d}\right)-F(u)\\
  & \leq \frac{25168 \sqrt{2}}{\pi \sqrt{d}} \int_{\pi/4}^{A(d)} \di \theta \sin(\theta)^{2d-1} \int_{(2 \cos \theta)^d}^{C(\theta)^d} \di u \, \frac{\ln u}{u^2} \frac{1}{(1+u^{2/d}-2u^{1/d} \cos \theta)^{d/2}}.
\end{align*}
Now, remarking that $g:x \mapsto (1+x^2-2x \cos \theta)$ is positive and increases on $[\cos \theta,\infty)$, we get the following upper bound
\begin{align*}
  & \leq  \frac{25168 \sqrt{2}}{\pi \sqrt{d}} \int_{\pi/4}^{A(d)} \di \theta \sin(\theta)^{2d-1} \underbrace{\int_{(2 \cos \theta)^d}^{C(\theta)^d} \di u \, \frac{\ln u}{u^2}}_{\leq  \int_{1}^\infty \di u \, \frac{\ln u}{u^2} = 1}  \underbrace{\frac{1}{(1+ 4 (\cos\theta)^2 - 4 (\cos \theta)^2)^{d/2}}}_{=1}\\
  & \leq \frac{25168 \sqrt{2}}{\pi \sqrt{d}} \left( A(d) - \frac{\pi}{4} \right) \left( \sin A(d) \right)^{2d-1}\\
  & \leq  \frac{25168 \sqrt{2}}{\pi\sqrt{d}} \frac{\pi}{12} \frac{2}{\sqrt{3}} \left(\frac{3}{4} \right)^{d} \text{ (because $A(d) \leq \pi/3$)}.
\end{align*}

\subsubsection{When $\theta \in [A(d),B]$} \label{sec:AB}
In that case, $0.1^d \leq (2 \cos \theta)^d \leq 1.1$. We then split the integral on $u$ according to $1.1$,
\begin{align*}
  & \int_{A(d)}^{B} \di \theta \sin(\theta)^{d-2} \int_{(2 \cos \theta)^d}^{C(\theta)^d} \di u \, \left(F\left(\frac{\lune(u^{1/d},\theta)}{V_d}\right)-F(u)\right).
\end{align*}

\paragraph{When $u \in [1.1,C(\theta)^d]$:}
By Proposition~\ref{prop:bound},
 \begin{align*}
   & \int_{A(d)}^{B} \di \theta \sin(\theta)^{d-2} \int_{1.1}^{C(\theta)^d} \di u \, \left(F\left(\frac{\lune(u^{1/d},\theta)}{V_d}\right)-F(u)\right)\\
   & \leq \frac{25168 \sqrt{2}}{\pi \sqrt{d}} \int_{A(d)}^{B} \di \theta \sin(\theta)^{2d-1} \int_{1.1}^{C(\theta)^d} \di u \, \frac{\ln u}{u^2} \frac{1}{\left(1+u^{2/d} - 2 u^{1/d} \cos \theta\right)^{d/2}}.
 \end{align*}
 Now, because $g:x \mapsto (1+x^2-2x \cos \theta)$ is positive and increases on $[\cos \theta,\infty)$ and that $\cos \theta \leq 1 \leq 1.1^{1/d} \leq u^{1/d}$, then $1+u^{2/d} - 2 u^{1/d} \cos \theta = g(u^{1/d}) \geq g(1) = 2(1-\cos \theta)$, and so
 \begin{align*}
   & \int_{A(d)}^{B} \di \theta \sin(\theta)^{d-2} \int_{1.1}^{C(\theta)^d} \di u \, \left(F\left(\frac{\lune(u^{1/d},\theta)}{V_d}\right)-F(u)\right)\\
   & \leq \frac{25168 \sqrt{2}}{\pi \sqrt{d}} \frac{1}{2^{d/2}} \int_{A(d)}^{B} \di \theta \underbrace{\sin(\theta)^{-1}}_{\leq 1/\sin(\pi/4) = \sqrt{2}} \frac{(\sin \theta)^{2d}}{(1-\cos \theta)^{d/2}} \underbrace{\int_{1.1}^{C(\theta)^d} \di u \, \frac{\ln u}{u^2}}_{\leq 1}\\
   & \leq \frac{50336}{\pi \sqrt{d}} \frac{1}{2^{d/2}} \int_{A(d)}^{B} \di \theta \left(\frac{(\sin \theta)^{4}}{1-\cos \theta}\right)^{d/2}.
 \end{align*}
 Now, remark that $\frac{(\sin \theta)^{4}}{1-\cos \theta} \leq \frac{32}{27}$. A simple way is to see that the maximum of $\theta \mapsto \frac{(\sin \theta)^{4}}{1-\cos \theta}$ on $[0,\pi/2]$ is the same as the one of $x \mapsto (1-x)(1+x)^2$ on $[0,1]$ setting $x = \cos t$. Then
 \begin{align*}
   & \int_{A(d)}^{B} \di \theta \sin(\theta)^{d-2} \int_{1.1}^{C(\theta)^d} \di u \, \left(F\left(\frac{\lune(u^{1/d},\theta)}{V_d}\right)-F(u)\right)\\
   & \leq \frac{50336}{\pi \sqrt{d}} \frac{1}{2 ^{d/2}} (B-A(d)) \left(\frac{32}{27} \right)^{d/2} \leq  \frac{50336}{\pi \sqrt{d}} \frac{\pi}{4} \left( \frac{4 \sqrt{3}}{9} \right)^d.
 \end{align*}
 
\paragraph{When $u \in [(2 \cos \theta)^d,1.1]$:} By Proposition~\ref{prop:bound},
\begin{align*}
  & \quad \int_{A(d)}^{B} \di \theta \sin(\theta)^{d-2} \int_{(2 \cos \theta)^d}^{1.1} \di u \, \left(F\left(\frac{\lune(u^{1/d},\theta)}{V_d}\right)-F(u)\right) \\
  & \leq \frac{18\sqrt{2}}{\pi \sqrt{d}} \int_{A(d)}^B \di \theta \sin(\theta)^{2d-1} \int_{(2 \cos \theta)^d}^{1.1} \di u \, \left( \frac{u^{2/d}}{1+u^{2/d}-2u^{1/d} \cos \theta}\right)^{d/2}.
\end{align*}
The function $\frac{x^2}{1+x^2-2x \cos \theta}$ is positive and increases on $[0,1/\cos \theta]$, and so on $[0,1.1]$. Indeed, $1/\cos \theta > 2 \cdot 1.1^{-1/d} > 1.1$ because  $d \geq 2$, then
\begin{align*} 
  & \int_{A(d)}^{B} \di \theta \sin(\theta)^{d-2} \int_{(2 \cos \theta)^d}^{1.1} \di u \, \left(F\left(\frac{\lune(u^{1/d},\theta)}{V_d}\right)-F(u)\right) \\
  & \leq \frac{19.8\sqrt{2}}{\pi \sqrt{d}} \int_{A(d)}^B \di \theta \sin(\theta)^{2d-1} \left( \frac{1.1^{2/d}}{1+1.1^{2/d}-2\times 1.1^{1/d} \cos \theta}\right)^{d/2}\\
  & \leq \frac{19.8\sqrt{2}}{\pi \sqrt{d}} \int_{A(d)}^B \di \theta \sin(\theta)^{2d-1} \left( \frac{1.1^{1/d}}{1.1^{-1/d}+1.1^{1/d}-2 \cos \theta}\right)^{d/2}.
\end{align*}
The last new argument is that the function $x \mapsto x+1/x$ is minimal when $x=1$. Then, we conclude as in the previous case $u \in [1.1,C(\theta)^d]$:
\begin{align*}
  & \leq \frac{19.8\sqrt{2}}{\pi \sqrt{d}} \sqrt{1.1} \frac{1}{\sin(\pi/4)} \int_{A(d)}^B \di \theta \left( \frac{\sin(\theta)^{4} }{2-2 \cos \theta}\right)^{d/2}\\
  & \leq \frac{19.8 \sqrt{2.2}}{\pi \sqrt{d}} \sqrt{2} \frac{1}{2^{d/2}} \int_{A(d)}^B \di \theta \left(\frac{(\sin \theta)^4}{1-\cos \theta}\right)^{d/2}\\
  & \leq \frac{39.6 \sqrt{1.1}}{\pi \sqrt{d}} \frac{1}{2^{d/2}} \left(B-A(d) \right) \left( \frac{32}{27}\right)^{d/2} \leq \frac{39.6 \sqrt{1.1}}{\pi \sqrt{d}} \frac{\pi}{4} \left(\frac{4\sqrt{3}}{9} \right)^d.
\end{align*}

\subsubsection{When $\theta \in [B,\pi/2]$}\label{sec:Bpi2}
\paragraph{When $u \in [1.1,C(\theta)^{d}]$:}
The first computation from Section~\ref{sec:AB} holds and like $\pi/2-B \leq \pi/4$,
\begin{displaymath}
  \int_{B}^{\pi/2} \di \theta \sin(\theta)^{d-2} \int_{1.1}^{C(\theta)^d} \di u \, F\left(\frac{\lune(u^{1/d},\theta)}{V_d}\right)-F(u) \leq  \frac{50336}{\pi\sqrt{d}} \frac{\pi}{4} \left(\frac{4\sqrt{3}}{9} \right)^{d}.
\end{displaymath}

\paragraph{When $u \in [0.1^d,1.1]$:}
The second computation from Section~\ref{sec:AB} holds, then
\begin{displaymath}
  \int_{B}^{\pi/2} \di \theta \sin(\theta)^{d-2} \int_{0.1^d}^{1.1} \di u \, F\left(\frac{\lune(u^{1/d},\theta)}{V_d}\right)-F(u) \leq \frac{39.6 \sqrt{1.1}}{\pi \sqrt{d}} \frac{\pi}{4} \left(\frac{4\sqrt{3}}{9} \right)^d.
\end{displaymath}

\paragraph{When $u \in [(2 \cos \theta)^d,0.1^d]$:}
\begin{displaymath}
  \int_{B}^{\pi/2} \di \theta \underbrace{\sin(\theta)^{d-2}}_{\leq 1} \int_{(2 \cos \theta)^d}^{0.1^d} \di u \, \underbrace{F\left(\frac{\lune(u^{1/d},\theta)}{V_d}\right)-F(u)}_{\leq 1/2} \leq \left( \frac{\pi}{2} - B \right) \frac{0.1^d}{2} \leq \frac{\pi}{4} 0.1^d.
\end{displaymath}

\subsection{End of the proof}
Putting all of the twelve previous asymptotics together and remarking that $\left(\frac{4\sqrt{3}}{9} \right)^d$ is the exponential dominating term, we obtain that
\begin{displaymath}
  T_-(d) = O \left( \left(\frac{4 \sqrt{3}}{9}\right)^d \right). \qed
\end{displaymath}

\section{Perspectives}
Our main theorem permits, given a $d$-NNT, to distinguish (asymptotically) its dimension by computing its mean number of siblings.

The number of siblings depends only on the local limit of the NNT. Hence, our main theorems could be generalised and stated for any $d$-dimensional manifold. In particular, we could not distinguish a NNT on $(\mathbb{S}_d,\lVert.\rVert_2,\lambda)$ and a NNT on $(\mathbb{T}_d,\lVert.\rVert_2,\lambda)$ where $\mathbb{T}_d$ denotes the torus in dimension $d$ by computing the mean number of siblings. Hence, it is an open question to distinguish a circle, a segment and a star with $k$ edges.

Another open question is whether could we distinguish a NNT on $(\mathbb{S}_d,\delta,\lambda)$ according to the metric $\delta$, either by the number of siblings or by another statistic. In particular, the number of siblings of NNT on $(\mathbb{S}_d,\lVert.\rVert_p,\lambda)$ and $(\mathbb{S}_d,\lVert.\rVert_{q},\lambda)$ when $\displaystyle \frac{1}{p}+\frac{1}{q}=1$ should be equal because their local limit are the same up to a rotation and a dilation (and so the closest point does not change). But, does the number of siblings depend on $p$ and $q$ if they are not Hölder conjugates?

Finally, with Theorems~\ref{thm:main} and~\ref{thm:as}, we should construct a statistical test that permits to find the dimension $d$ from a $d$-NNT. But this test needs an exponential number of nodes according to dimension~$d$. Could we find a better statistic? 

\section*{Acknowledgement}
The author would like to thank Nicolas Curien to introduce him to the subject of NNT and RRT and to suggest him to compute the mean number of siblings, Alice Contat for the help during the exact computation in dimension $1$, and Robin Stephenson to introduce him to the literature on geometric preferential attachment graphs and FKP network models. The author would also like to thank ERC 740943 GeoBrown and ANR RanTanPlan for their support.

\bibliographystyle{alpha}
\bibliography{aaa}

\newcommand{\etalchar}[1]{$^{#1}$}
\begin{thebibliography}{BBB{\etalchar{+}}03}

\bibitem[Ald18]{Aldous18}
David Aldous.
\newblock Random partitions of the plane via {P}oissonian coloring and a
  self-similar process of coalescing planar partitions.
\newblock {\em The Annals of Probability}, 46(4):2000--2037, 2018.

\bibitem[BB14]{BB14}
Erich Baur and Jean Bertoin.
\newblock Cutting edges at random in large recursive trees.
\newblock In {\em Stochastic Analysis and Applications 2014: In Honour of Terry
  Lyons}, pages 51--76. Springer, 2014.

\bibitem[BBB{\etalchar{+}}03]{BBBCR03}
Noam Berger, B{\'e}la Bollob{\'a}s, Christian Borgs, Jennifer Chayes, and
  Oliver Riordan.
\newblock Degree distribution of the fkp network model.
\newblock In {\em International Colloquium on Automata, Languages, and
  Programming}, pages 725--738. Springer, 2003.

\bibitem[BBCS23]{BBCS23}
Anne-Laure Basdevant, Guillaume Blanc, Nicolas Curien, and Arvind Singh.
\newblock Fractal properties of the frontier in poissonian coloring.
\newblock {\em arXiv preprint arXiv:2302.07254}, 2023.

\bibitem[Cur23]{Curien23}
Nicolas Curien.
\newblock Random graphs : quodlibet.
\newblock
  https://www.imo.universite-paris-saclay.fr/~nicolas.curien/enseignement.html,
  2023.

\bibitem[Dev87]{Devroye87}
Luc Devroye.
\newblock Branching processes in the analysis of the heights of trees.
\newblock {\em Acta Informatica}, 24(3):277--298, 1987.

\bibitem[Dev88]{Devroye88}
Luc Devroye.
\newblock Applications of the theory of records in the study of random trees.
\newblock {\em Acta Informatica}, 26(1-2):123, 1988.

\bibitem[DFF10]{DFF10}
Luc Devroye, Omar Fawzi, and Nicolas Fraiman.
\newblock The height of scaled attachment random recursive trees.
\newblock In {\em Discrete Mathematics and Theoretical Computer Science}, pages
  129--142. Discrete Mathematics and Theoretical Computer Science, 2010.

\bibitem[DL95]{DL95}
Luc Devroye and Jiang Lu.
\newblock The strong convergence of maximal degrees in uniform random recursive
  trees and dags.
\newblock {\em Random Structures \& Algorithms}, 7(1):1--14, 1995.

\bibitem[Dob96]{Dobrow96}
Robert~P. Dobrow.
\newblock On the distribution of distances in recursive trees.
\newblock {\em Journal of Applied Probability}, 33(3):749--757, 1996.

\bibitem[Drm09]{Drmota09}
Michael Drmota.
\newblock {\em Random trees: an interplay between combinatorics and
  probability}.
\newblock Springer Science \& Business Media, 2009.

\bibitem[FK23]{FK23}
Alan Frieze and Micha{\l} Karo{\'n}ski.
\newblock {\em Introduction to random graphs}.
\newblock Cambridge University Press, 2023.

\bibitem[FKP02]{FKP02}
Alex Fabrikant, Elias Koutsoupias, and Christos~H Papadimitriou.
\newblock Heuristically optimized trade-offs: A new paradigm for power laws in
  the internet.
\newblock In {\em Automata, Languages and Programming: 29th International
  Colloquium, ICALP 2002 M{\'a}laga, Spain, July 8--13, 2002 Proceedings 29},
  pages 110--122. Springer, 2002.

\bibitem[GM05]{GM05}
Christina Goldschmidt and James Martin.
\newblock {Random Recursive Trees and the Bolthausen-Sznitman Coalesent}.
\newblock {\em Electronic Journal of Probability}, 10:718 -- 745, 2005.

\bibitem[GS02]{GS02}
William Goh and Eric Schmutz.
\newblock Limit distribution for the maximum degree of a random recursive tree.
\newblock {\em Journal of Computational and Applied Mathematics}, 142(1):61 --
  82, 2002.
\newblock Probabilistic Methods in Combinatorics and Combinatorial
  Optimization.

\bibitem[Jor10]{Jordan10}
Jonathan Jordan.
\newblock Degree sequences of geometric preferential attachment graphs.
\newblock {\em Advances in Applied Probability}, 42(2):319--330, 2010.

\bibitem[JW15]{JW15}
Jonathan Jordan and Andrew~R Wade.
\newblock Phase transitions for random geometric preferential attachment
  graphs.
\newblock {\em Advances in Applied Probability}, 47(2):565--588, 2015.

\bibitem[LM21]{LM21}
Lyuben Lichev and Dieter Mitsche.
\newblock New results for the random nearest neighbor tree.
\newblock {\em arXiv preprint arXiv:2108.13014}, 2021.

\bibitem[Mah91]{Mahmoud91}
Hosam~M Mahmoud.
\newblock Limiting distributions for path lengths in recursive trees.
\newblock {\em Probability in the Engineering and Informational Sciences},
  5(1):53--59, 1991.

\bibitem[MS02]{MS02}
Subhrangshu~S Manna and Parongama Sen.
\newblock Modulated scale-free network in euclidean space.
\newblock {\em Physical Review E}, 66(6):066114, 2002.

\bibitem[Pit94]{Pittel94}
Boris Pittel.
\newblock Note on the heights of random recursive trees and random m-ary search
  trees.
\newblock {\em Random Structures \& Algorithms}, 5(2):337--347, 1994.

\bibitem[Pre09]{Preater09}
John Preater.
\newblock A species of voter model driven by immigration.
\newblock {\em Statistics \& probability letters}, 79(20):2131--2137, 2009.

\bibitem[PS22]{PS22}
Michel Pain and Delphin S{\'e}nizergues.
\newblock Correction terms for the height of weighted recursive trees.
\newblock {\em The Annals of Applied Probability}, 32(4):3027--3059, 2022.

\bibitem[SM95]{SM95}
Robert~T Smythe and Hosam~M Mahmoud.
\newblock A survey of recursive trees.
\newblock {\em Theory of Probability and Mathematical Statistics}, 51(1-27),
  1995.

\end{thebibliography}

\end{document}